\newtheorem{theorem}{Theorem}[section]
\newtheorem{corollary}[theorem]{Corollary}
\newtheorem{lemma}[theorem]{Lemma}
\newtheorem{proposition}[theorem]{Proposition}
\theoremstyle{definition}
\newtheorem{conjecture}{Conjecture}[section]
\theoremstyle{remark}
\numberwithin{equation}{section}
\newcommand{\Irr}{\textsf{Irr}}
\newcommand{\Aut}{\textsf{Aut}}
\newcommand{\Out}{\mathsf{Out}}
\newcommand{\In}{\mathsf{I}}
\newcommand{\St}{\mathsf{St}}
\newcommand{\Msf}{\textsf{M}}
\newcommand{\GAP}{\textsf{GAP} }
\newcommand{\ATLAS}{\textsf{ATLAS} }
\newcommand{\cd}{\mathsf{cd}}
\newcommand{\Z}{\mathbf{Z}}
\newcommand{\C}{\mathbf{C}}
\newcommand{\Cyc}{\mathrm{C}}
\newcommand{\D}{\mathrm{D}}
\newcommand{\PSL}{\mathrm{PSL}}
\newcommand{\PSp}{\mathrm{PSp}}
\newcommand{\PSU}{\mathrm{PSU}}
\newcommand{\POm}{\mathrm{P\Omega}}
\newcommand{\B}{\mathrm{B}}
\newcommand{\E}{\mathrm{E}}
\newcommand{\A}{\mathrm{A}}
\renewcommand{\S}{\mathrm{S}}
\newcommand{\M}{\mathrm{M}}
\newcommand{\Qr}{\mathrm{Q}}
\newcommand{\F}{\mathrm{F}}
\newcommand{\G}{\mathrm{G}}
\newcommand{\J}{\mathrm{J}}
\newcommand{\Gp}{\overline{G'}}
\newcommand{\U}{\overline{U}}
\newcommand{\I}{\overline{I}}
\newcommand{\Q}{\overline{Q}}
\newcommand{\e}{\epsilon}
\newcommand{\ve}{\varepsilon}
\newcommand{\vp}{\varphi}
\renewcommand{\leq}{\leqslant}
\renewcommand{\geq}{\geqslant}
\renewcommand{\bar}{\overline}
\providecommand{\mod}[1]{ \ (\mathrm{mod}\ #1)}
\begin{document}

\title[Groups with character degrees of almost simple Ree groups]{On groups with the same character degrees as almost simple groups with socle small Ree groups}

\author[S.H. Alavi]{Seyed Hassan Alavi}
\address{%
Seyed Hassan Alavi, Department of Mathematics,
 Faculty of Science,
 Bu-Ali Sina University, Hamedan, Iran}
 \email{alavi.s.hassan@basu.ac.ir}
 \email{alavi.s.hassan@gmail.com}


\subjclass[2010]{Primary 20C15; Secondary 20D05}
\keywords{Huppert's Conjecture, Almost simple groups, Character degrees.}

\begin{abstract}
  Let $G$ be a finite group and $\cd(G)$ denote the set of complex irreducible character degrees of $G$. In this paper, we prove that if $G$ is a finite group and $H$ is an almost simple group with socle $H_{0}= \, ^{2}\G_{2}(q)$, where  $q=3^{f}$ with $f\geq 3$ odd such that $\cd(G)=\cd(H)$, then $G$ is non-solvable and the chief factor $G'/M$ of $G$ is isomorphic to $H_{0}$. If, in particular, $f$ is coprime to $3$, then $G'$ is isomorphic to $H_{0}$ and $G/\Z(G)$ is isomorphic to $H$.
\end{abstract}

\date{\today}
\maketitle


\section{Introduction}\label{sec:Intro}

Let $G$ be a finite group, and let $\Irr(G)$ be the set of complex irreducible characters of $G$. Denote the set of character degrees of $G$ by $\cd(G)$, and when the context allows us the set of irreducible character degrees will be referred to as the set of character degrees. There are various examples showing that the set of character degrees of $G$ cannot completely determine the structure of $G$ even for nilpotent and solvable groups.
For example, the non-isomorphic groups $\D_8$ and $\Qr_8$ not only have the same set of character degrees, but also share the same character table and  $\cd(\Qr_{8})=\cd(\S_{3}) = \{1, 2\}$. However, in the late 1990s, Huppert \cite{a:Hupp-I} posed a conjecture which, if true, would sharpen the connection between the character degree set of a non-abelian simple group and the structure of the group.

\begin{conjecture}[Huppert] Let $G$ be a finite group, and let $H$ be a finite non-abelian simple group such that the sets of character degrees of $G$ and $H$ are the same. Then $G \cong H \times A$, where $A$ is an abelian group.
\end{conjecture}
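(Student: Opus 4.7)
The plan is to tackle this via the five-step strategy that Huppert himself proposed and that has since been executed, one family at a time, for most non-abelian finite simple groups (including the special case of ${}^2\G_2(q)$ treated in the present paper). Because no uniform character-theoretic argument is known, the approach depends essentially on the Classification of Finite Simple Groups (CFSG): one identifies which simple group $H$ is among the Lie-type, alternating, or sporadic families, and adapts the five steps to the specific arithmetic and representation theory of that family.

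First, I would show that $G' = G''$, i.e.\ that $G'$ is perfect. This follows from Huppert's lemma relating the derived length of $G/G'$ to $|\cd(G)|$, together with the observation that $H$ being non-abelian simple forces $G$ to have non-trivial non-linear degrees; combined with a theorem of It\^o--Michler on prime divisors of character degrees, one constrains the structure of $G/G'$ and forces $G'$ to coincide with its own commutator subgroup. Second, I would locate a chief factor $G'/M$ isomorphic to $H$. Using the fact (obtained from CFSG and detailed degree tables) that $H$ is uniquely determined among non-abelian simple groups by its degree set, together with the behaviour of character degrees under direct products and central extensions, one excludes the possibility that the unique non-abelian composition factor of $G'/M$ is any simple group other than $H$; this typically uses lower bounds on the smallest non-trivial character degree of alternative candidates.

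Third, one must show that $M = 1$. Supposing for contradiction that $M \neq 1$, one chooses a minimal normal subgroup $N/K$ of $G'/K$ inside $M/K$ for suitable $K$, studies the induced action of $H \cong G'/M$ on this elementary abelian section, and applies Clifford theory to an irreducible character of $N$ that is extended and then induced to $G'$: if $N$ is trivial as an $H$-module this contradicts minimality, while if not, the smallest non-trivial irreducible $H$-module forces a character degree divisible by its dimension, which must lie in $\cd(H)$ and leads to an arithmetic contradiction. Fourth, with $M = 1$ and hence $G' \cong H$, one examines $C_G(G')$: since $G/G'C_G(G')$ embeds in $\Out(H)$ and $\cd(G)$ contains no degrees beyond $\cd(H)$, one shows that this quotient is trivial, that $C_G(G')$ is abelian, and ultimately that $G = G' \times C_G(G')$.

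The main obstacle, and the reason the conjecture remains open in general, is Step three: the analysis of the $H$-module structure on $M$ requires detailed knowledge of the minimal dimensions of irreducible representations of $H$ in every characteristic, along with fusion and extendibility data for characters of maximal subgroups. For classical groups of large rank and for exceptional groups, these representation-theoretic inputs are delicate and family-specific; for sporadic groups one relies on the \ATLAS and ad hoc computation. Consequently, in an actual attempt I would not try to prove the conjecture uniformly, but rather reduce to verifying, case by case via CFSG, that the relevant degree-dimension arithmetic goes through for each simple group $H$, mirroring the present paper's treatment of the small Ree groups ${}^2\G_2(q)$.
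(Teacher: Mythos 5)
The statement you were asked to prove is a conjecture: the paper states it without proof, and no proof of it is known. What you have written is an accurate summary of Huppert's five-step programme --- essentially the same skeleton the paper follows for its own Theorem~\ref{thm:main} (Proposition~\ref{prop:step-1} gives $G'=G''$, Proposition~\ref{prop:step-2} identifies the chief factor $G'/M$, Proposition~\ref{prop:step-3} forces $M=1$, and Lemma~\ref{lem:as} handles the centralizer step) --- but it is a strategy outline, not a proof, and you acknowledge as much in your final paragraph.

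The genuine gap is therefore the entire content of the conjecture for a general non-abelian simple group $H$. Two of your steps are not reductions but restatements of the open problem. In your second step you invoke ``the fact \dots\ that $H$ is uniquely determined among non-abelian simple groups by its degree set''; this is not a known theorem in general --- it is a consequence of the conjecture rather than an available input, and establishing it for each $H$ requires precisely the case-by-case arithmetic you defer. In your third step, eliminating $M$ requires, for each family of simple groups, lower bounds on the dimensions of irreducible $H$-modules in every characteristic together with extendibility and fusion data; no uniform argument covering, say, classical groups of unbounded rank is known. So the proposal cannot be completed as written: it correctly describes how the verified cases (alternating groups, sporadic groups, $^{2}\G_{2}(q)$, and others) have been handled, but it does not --- and at present cannot --- prove the conjecture as stated. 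A minor further point: the first step ($G'=G''$) is normally obtained from the dichotomy for a minimal non-abelian solvable quotient (Lemma~\ref{lem:factsolv}), as in Proposition~\ref{prop:step-1} of the paper, rather than from the It\^o--Michler theorem you cite.
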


The conjecture asserts that the non-abelian simple groups are essentially characterized by the set of their character degrees. This conjecture is verified for alternating groups, sporadic simple groups, some finite simple groups of Lie type including the  Ree groups $^{2}\G_{2}(q)$, see for example \cite{a:ADTW-2011,a:ADTW-2013,a:Hup-Alt,a:Hupp-I,a:Hupp-III-VIII,a:Hup-PSL4,a:HT-Moster,a:Wakefield-2011-2G2}. Since 2016, we have started to study the structure of groups with the same character degrees as almost simple groups including those with socle  $\PSL_{2}(q)$ \cite{a:D-PSL2-ch2,a:AD-2022-PSL2}, Suzuki groups \cite{a:ADM-2021-Sz}, Ree groups \cite{a:AD-2021-2F4} and sporadic simple groups \cite{a:ADJ-Spor,a:ADJ-Mat}. There are several examples provided in \cite{a:ADJ-Spor,a:ADJ-Mat,a:D-PSL2-ch2} which show that Huppert's conjecture cannot be extended to almost simple groups. If $H$ is an almost simple group with socle $H_{0}=\PSL_{2}(2^{f})$ with $f$ prime, Daneshkhah \cite{a:D-PSL2-ch2} proved that there is an abelian group $A$ such that $G/A$ is isomorphic to $H$. In particular, she proved that $G'=H_{0}$ and $A=\Z(G)$ with only one exception, namely, when $H=\Aut(\PSL_{2}(4))$ in which $G'=H_{0}$ or $2\cdot H_{0}$, see \cite[Theorem~1.1]{a:D-PSL2-ch2}. It seems that if $G$ is a group  with the same character degrees as an almost simple group $H$ with socle $H_{0}$, then $G'=H_{0}$ and $G/\Z(G)$ is isomorphic to $H$ with a few exceptions for which $G'$ is an extension of an abelian normal subgroup of $G$ by $H_{0}$ in which cases we still have $G/A\cong H$ with $A$ central in $G$. 
In this paper, we continue our investigation to support this statement which is the almost simple group version of Huppert's conjecture, and study groups whose character degrees are the same as those of almost simple groups with socle Ree groups:

\begin{theorem}\label{thm:main}
	Suppose that  $H$ is an almost simple group
	with socle $H_{0}=\,^{2}\G_{2}(q)$, where $q=3^{f}$ with $f\geq 3$ odd. Let $G$ be a finite group  with  $\cd(G)=\cd(H)$. Then  $G$ is nonsolvable and the chief factor $G'/M$ of $G$ is isomorphic to $H_{0}$. If $f$ is coprime to $3$, then $G'\cong H_{0}$ and $G/\Z(G)$ is isomorphic to $H$.
\end{theorem}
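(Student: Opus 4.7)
The plan is to follow the standard five-step Huppert-style framework that has been used successfully in earlier work on $\PSL_{2}(q)$, the Suzuki groups and the large Ree groups \cite{a:D-PSL2-ch2,a:AD-2022-PSL2,a:ADM-2021-Sz,a:AD-2021-2F4}, taking as a black box Wakefield's verification of the original Huppert conjecture for the small Ree groups \cite{a:Wakefield-2011-2G2}. The arithmetic inputs we will repeatedly exploit from $H_{0}={}^{2}\G_{2}(q)$ are the explicit description of $\cd(H_{0})$---in particular the Steinberg-like degree $q^{3}$, the unipotent degree $q(q^{2}-q+1)/2$, and the semisimple degrees coming from the factorisation $q^{3}+1=(q+1)(q^{2}-q+1)$---together with the fact that, for $q=3^{f}$ with $f\geq 3$ odd, the Schur multiplier of $H_{0}$ is trivial and $\Out(H_{0})$ is cyclic of order $f$.

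The first step is to prove $G'=G''$: if $G/G''$ were a nontrivial solvable quotient, standard Gallagher and It\^o--Michler arguments applied to $\cd(G)=\cd(H)$, combined with the arithmetic shape of $\cd(H)$, would yield either a forbidden degree or a derived-length bound incompatible with $\cd(H)$, and in particular $G$ is non-solvable. Next, choose $M\triangleleft G$ maximal subject to $M\leq G'$, so that $G'/M\cong S^{k}$ for some non-abelian simple $S$. Since $\cd(S)\subseteq\cd(H)$ by Gallagher and inflation, a case analysis across the classification---eliminating alternating and sporadic groups by direct comparison, and eliminating simple groups of Lie type different from $H_{0}$ in both defining and cross characteristic via Zsigmondy primes attached to $q^{6}-1$ and the largest character degrees of $H$---identifies $S$ with $H_{0}$ and forces $k=1$, proving the first assertion of Theorem~\ref{thm:main}.

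For the second assertion assume $\gcd(f,3)=1$; the goal is $M=1$. If $\theta\in\Irr(M)$ were nontrivial, then by Clifford theory each irreducible constituent of $\theta^{G}$ would contribute a degree divisible by $\theta(1)$ times a degree of a cover of $H_{0}$, and inspection of $\cd(H)$ shows this is impossible unless $\theta$ is linear; hence $M$ is abelian, and refining via the action of $G'/M\cong H_{0}$ on $M$ yields $M\leq\Z(G')$. The triviality of the Schur multiplier of $H_{0}$ then forces the central extension $1\to M\to G'\to H_{0}\to 1$ to split, so $G'\cong H_{0}$. Finally, $G/\Cyc_{G}(G')$ embeds in $\Aut(H_{0})$; matching its character degrees against $\cd(H)$, and using that $H/H_{0}$ is a subgroup of the cyclic group $\Out(H_{0})$ of order $f$, gives $G/\Cyc_{G}(G')\cong H$. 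Since $\Z(G')=1$, we have $\Cyc_{G}(G')=\Z(G)$, and thus $G/\Z(G)\cong H$.

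The main obstacle will be the Lie-type case in identifying $S$ with $H_{0}$: several small-rank simple groups of Lie type in characteristic $3$ have character degree sets with considerable overlap with $\cd({}^{2}\G_{2}(q))$, and excluding them requires a careful comparison of the largest degree and of the prime divisors of $q^{3}+1$ via Zsigmondy primes. The coprimality hypothesis $\gcd(f,3)=1$ enters only in the passage from $M\leq\Z(G')$ to $M=1$, where it is used to rule out arithmetic coincidences arising from proper subfield subgroups $^{2}\G_{2}(3^{f/3})\leq H_{0}$; in the complementary case $3\mid f$ these subfield subgroups produce character degrees that obstruct the identification of $M$ as trivial, which is why the sharper conclusion is stated only under the coprimality hypothesis.
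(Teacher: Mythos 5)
Your overall architecture (non-solvability via $G'=G''$, identification of the chief factor $G'/M$ with $H_0$ by running through the classification, then $M=1$ and recovery of the almost simple quotient) matches the paper's, but two of your key steps have genuine gaps. First, in identifying $S$ you assert $\cd(S)\subseteq\cd(H)$; this is false in general --- Clifford theory only gives that each degree of $G'/M\cong S^k$ \emph{divides} some degree of $G$, and one only gets actual membership $\theta(1)^k\in\cd(G)$ for those $\theta\in\Irr(S)$ that extend to $\Aut(S)$. The paper's elimination instead first bounds $k\leq 3$ using the fact that the $2$-part of every degree of $H$ divides $2^3$, then forces $|S|_p^{k}=q^{3}$ because $q^{3}$ is the unique prime-power degree of $H$ (so $p=3$ at once, killing all cross-characteristic cases with no Zsigmondy argument), and finally compares the $3$-parts of extendible unipotent degrees against $\sqrt{3q}/3$. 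Your Zsigmondy plan for cross-characteristic groups is both unnecessary and, without the extendibility input and a bound on $k$, not obviously closable. (Incidentally, $q(q^{2}-q+1)/2$ is not a degree of ${}^{2}\G_{2}(q)$.)

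Second, and more seriously, the passage from $G'/M\cong H_0$ to $M=1$ is where all the work lies, and your sketch skips it. You claim nonlinear $\theta\in\Irr(M)$ are impossible ``by inspection of $\cd(H)$'' and that the action of $H_0$ on the abelian group $M$ ``refines'' to $M\leq\Z(G')$; neither assertion is given a mechanism. The paper must first prove that every \emph{linear} character of $M$ is $G'$-invariant: this requires showing the inertia group cannot lie in any maximal subgroup of $H_0$ other than the parabolic $[q^{3}]{:}\Cyc_{q-1}$ (Lemma~\ref{lem:maxs}), then analysing the inertia group inside that parabolic via a Frobenius subgroup $\Q{:}\Cyc_{s/2}$ and It\^o's theorem. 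Only then do Lemma~\ref{lem:schur} (giving $M=M'$ from the trivial Schur multiplier) and a chief-factor argument on $M$ (giving $M$ abelian) combine to yield $M=1$. You have also misplaced the hypothesis $\gcd(f,3)=1$: it has nothing to do with subfield subgroups ${}^{2}\G_{2}(q_0)$, which are excluded unconditionally in Lemma~\ref{lem:maxs}; it is used inside the parabolic analysis to guarantee that $|U:I|$, a divisor of $f$, is coprime to $3$ (so the full unipotent radical $[q^{3}]$ lies in the inertia group) and that the degrees $\varphi_i(1)$ are coprime to $3$ (so It\^o's theorem produces the contradiction). Without supplying this inertia-group analysis your proof of $M=1$ does not go through.
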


Note that Theorem~\ref{thm:main} for $H=\,^{2}\G_{2}(q)$ is proved  by Wakefield \cite{a:Wakefield-2011-2G2}, and so we need only to focus on the case where $^{2}\G_{2}(q)\neq H\leq \Aut(^{2}\G_{2}(q))$. In order to prove Theorem~\ref{thm:main}, it is worth noting that we do not need to determine all the character degrees of $H$. We use several useful facts proved in Lemma \ref{lem:deg} on the character degrees of $G$. For example, one of the key result is the fact that the  $2$-part of the character degrees of $G$ divides $2^{3}$. 
It is unfortunate to state that our method does not work when $3\mid f$, in which case, we think that $G'$ may not be isomorphic to $H_{0}$ but  we still believe that $G/A$ is isomorphic to $H$,  for some abelian subgroup $A$ of $G$.  


\section{Preliminaries}\label{sec:prem}

In this section, we present some useful results to prove Theorem~\ref{thm:main}. We first establish some definitions and notation. Throughout this paper all groups are finite. A finite group of order $n$ is denoted by ``$n$''.
We use $\Cyc_{n}$ to denote the  cyclic group of order $n$, and $\A_{n}$ and $\S_{n}$ are the alternating group and the symmetric group on $n$ letters, respectively. We follow standard notation as in \cite{b:Atlas} for finite simple groups.
A  group $H$ is said to be an almost simple group
with socle $H_{0}$ if $H_{0}\leq H\leq \Aut(H_{0})$,
where $H_{0}$ is a non-abelian simple group.
If $N\unlhd G$ and $\theta\in \Irr(N)$, then the \emph{inertia group} (or the  stabilizer of $\theta$ in $G$) $\In_G(\theta)$ of
$\theta$ in $G$ is defined by $\In_G(\theta)=\{g\in G \mid \theta^g=\theta\}$. If the character $\chi=\sum_{i=1}^k e_i\chi_i$, where each $\chi_i$ is an irreducible character of $G$ and $e_i$ is a non-negative integer, then those $\chi_i$ with $e_i>0$ are called the \emph{irreducible constituents} of $\chi$.
All further notation and definitions are standard and can be found in \cite{b:Hupp-Char,b:Isaacs}. For the computation parts, we use GAP~\cite{GAP}.

\begin{lemma}[{\cite[Theorems 19.5 and 21.3]{b:Hupp-Char}}]\label{lem:gal}
	Suppose that $N\unlhd G$ and $\chi\in \Irr(G)$.
	\begin{enumerate}[\rm (i)]
		\item[\rm (i)] If $\chi_N=\theta_1+\cdots+\theta_k$ with $\theta_i\in {\rm{\Irr}}(N)$, then $k$ divides $| G/N|$. In particular, if $\chi(1)$ is prime to $|G/N|$, then $\chi_N\in \Irr(N)$.
		\item[\rm (ii)] (Gallagher's Theorem) If $\chi_N\in \Irr(N)$, then $\chi\psi\in \Irr(G)$ for all $\psi\in \Irr(G/N)$.
	\end{enumerate}
\end{lemma}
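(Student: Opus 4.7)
The plan is to deduce (i) from Clifford's theorem on the restriction of an irreducible character to a normal subgroup, and to deduce (ii) from Schur's orthogonality relations applied to $\chi_{N}$.

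For (i), I apply Clifford's theorem to write
\[
\chi_{N} = e\bigl(\theta_{1}^{*} + \cdots + \theta_{t}^{*}\bigr),
\]
where $\theta_{1}^{*}, \ldots, \theta_{t}^{*}$ are the distinct $G$-conjugates of any fixed irreducible constituent and $e$ is their common multiplicity. Matching this against the given presentation $\chi_{N} = \theta_{1} + \cdots + \theta_{k}$ shows $k = et$. Setting $T := \In_{G}(\theta_{1}^{*})$, the orbit-stabilizer identity gives $t = [G : T]$, which divides $[G : N]$ because $N \leq T$. The Clifford correspondence then produces $\psi \in \Irr(T \mid \theta_{1}^{*})$ with $\chi = \psi^{G}$ and $\psi_{N} = e\theta_{1}^{*}$; comparing degrees in the Frobenius-reciprocity decomposition $(\theta_{1}^{*})^{T} = \sum_{\phi \in \Irr(T \mid \theta_{1}^{*})} e_{\phi}\phi$ (valid because $\theta_{1}^{*}$ is $T$-invariant) then yields $e \mid [T : N]$. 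Combining, $k = et$ divides $[G : T] \cdot [T : N] = [G : N]$. The ``in particular'' statement is formal: $\chi(1) = k\,\theta_{1}^{*}(1)$ forces $k \mid \chi(1)$, so under coprimality with $[G : N]$ we must have $k = 1$ and $\chi_{N} = \theta_{1}^{*} \in \Irr(N)$.

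For (ii), fix a unitary representation $\rho$ affording $\chi$ and set $\theta := \chi_{N} \in \Irr(N)$. Because $\rho|_{N}$ is irreducible, Schur's orthogonality for its matrix coefficients gives, for each $g \in G$,
\[
\sum_{n \in N}|\chi(gn)|^{2} = \frac{|N|}{\theta(1)}\,\mathrm{tr}\bigl(\rho(g)\rho(g)^{*}\bigr) = |N|,
\]
the last equality because $\rho(g)$ is unitary. Since $\psi$ has $N$ in its kernel, $|\psi|^{2}$ is constant on cosets of $N$; partitioning the defining sum of $\langle \chi\psi, \chi\psi\rangle_{G}$ by $N$-cosets therefore gives
\[
\langle \chi\psi, \chi\psi\rangle_{G} = \frac{|N|}{|G|}\sum_{gN \in G/N}|\psi(gN)|^{2} = \langle \psi, \psi\rangle_{G/N} = 1,
\]
so $\chi\psi \in \Irr(G)$.

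The only step beyond routine bookkeeping is the $N$-coset identity $\sum_{n \in N}|\chi(gn)|^{2} = |N|$ used in (ii), which is precisely where the irreducibility of $\chi_{N}$ enters, through Schur orthogonality. In (i), the slightly delicate ingredient is the divisibility $e \mid [T : N]$ extracted from the Clifford correspondence; both facts are spelled out explicitly in the referenced Theorems~19.5 and 21.3 of Huppert's monograph, which is why this lemma is simply quoted here rather than reproved.
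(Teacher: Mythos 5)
The paper does not prove this lemma at all---it is quoted directly from Huppert's monograph (Theorems 19.5 and 21.3)---so your proposal is measured against the standard arguments rather than against anything in the text. Your part (ii) is correct and complete: Schur orthogonality for the matrix coefficients of the irreducible unitary restriction $\rho|_N$ does give $\sum_{n\in N}|\chi(gn)|^2=|N|$ for every $g$, and partitioning $\langle\chi\psi,\chi\psi\rangle_G$ over $N$-cosets then yields norm $1$. This is a genuinely self-contained and rather elementary route to the statement as used here (it does not recover the stronger form of Gallagher's theorem, that $\psi\mapsto\chi\psi$ is a bijection onto the characters lying over $\chi_N$, but the lemma does not claim that).

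Part (i), however, has a real gap. The reduction $k=et$ with $t=[G:T]\mid[G:N]$, where $T=\In_G(\theta_1^*)$, is fine, but the divisibility $e\mid[T:N]$ is \emph{not} obtained by ``comparing degrees'' in the decomposition $(\theta_1^*)^T=\sum_{\phi}e_\phi\phi$. Frobenius reciprocity together with Clifford's theorem inside $T$ gives $e_\phi=\phi(1)/\theta_1^*(1)$, so the degree comparison yields $[T:N]=\sum_\phi e_\phi^{2}$ and hence only the inequality $e^{2}\leq[T:N]$, which is strictly weaker than what you need. The statement $e\mid[T:N]$ (equivalently: if $\theta$ is invariant in $T$ and $\phi\in\Irr(T)$ lies over $\theta$, then $\phi(1)/\theta(1)$ divides $[T:N]$) is a substantive theorem---Corollary 11.29 in Isaacs---whose known proofs go through character triples or projective representations, not through orthogonality bookkeeping. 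Since this is essentially the content of the cited Theorem 19.5, deferring to Huppert is legitimate, but you should recognise that this deferred step \emph{is} the whole of part (i), not a routine detail of your argument.
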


\begin{lemma}[{\cite[Theorems 19.6 and 21.2]{b:Hupp-Char}}]\label{lem:clif}
	Suppose that $N\unlhd G$ and $\theta\in {\rm{\Irr}}(N)$. Let $I=\In_G(\theta)$.
	\begin{enumerate}
		\item[\rm (i)]  If $\theta^I=\sum_{i=1}^k\phi_i$ with $\phi_i\in {\rm{\Irr}}(I)$, then $\phi_i^G\in {\rm{\Irr}}(G)$. In particular, $\phi_i(1)|G:I|\in \cd(G)$.
		\item[\rm (ii)] If $\theta$ extends to $\varphi\in {\rm{\Irr}}(I)$, then $(\varphi\tau )^G\in {\rm{\Irr}}(G)$ for all $\tau\in {\rm{\Irr}}(I/N)$. In particular, $\theta(1)\tau(1)|G:I|\in {\rm{\cd}}(G)$.
	\end{enumerate}
\end{lemma}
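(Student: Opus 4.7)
The plan is to derive the two assertions from three classical character-theoretic tools: Clifford's theorem, the Clifford correspondence, and Gallagher's theorem.

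Part (i) is a direct application of the Clifford correspondence. Recall that for $N \unlhd G$, $\theta \in \Irr(N)$, and inertia group $I = \In_{G}(\theta)$, induction $\phi \mapsto \phi^{G}$ is a bijection
\[
\Irr(I \mid \theta) := \{\phi \in \Irr(I) : \langle \phi_{N}, \theta \rangle \neq 0\} \; \longrightarrow \; \Irr(G \mid \theta).
\]
From the hypothesis $\theta^{I} = \sum_{i=1}^{k} \phi_{i}$, Frobenius reciprocity yields
\[
\langle (\phi_{i})_{N}, \theta \rangle = \langle \phi_{i}, \theta^{I} \rangle > 0,
\]
so each $\phi_{i}$ lies above $\theta$, i.e.\ $\phi_{i} \in \Irr(I \mid \theta)$. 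The Clifford correspondence then gives $\phi_{i}^{G} \in \Irr(G)$; since induction multiplies degrees by the index, $\phi_{i}^{G}(1) = \phi_{i}(1)|G:I| \in \cd(G)$, as required.

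Part (ii) combines Gallagher's theorem with part (i). Since $\theta$ extends to $\varphi \in \Irr(I)$, that is $\varphi_{N} = \theta$, Gallagher's theorem applied to the pair $N \unlhd I$ yields $\varphi\tau \in \Irr(I)$ for every $\tau \in \Irr(I/N)$, viewed as a character of $I$ trivial on $N$. Because $\tau_{N} = \tau(1)\cdot 1_{N}$, we have $(\varphi\tau)_{N} = \tau(1)\theta$, so $\varphi\tau$ lies above $\theta$; part (i) now gives $(\varphi\tau)^{G} \in \Irr(G)$, of degree $\theta(1)\tau(1)|G:I|$.

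The main obstacle is conceptual rather than computational: one needs the three supporting results at hand. Clifford's theorem itself is a short orbit-sum argument on the $G$-action on $\Irr(N)$ by conjugation. The Clifford correspondence requires showing $\langle \phi^{G}, \phi^{G} \rangle_{G} = 1$ for $\phi \in \Irr(I \mid \theta)$, which follows from Frobenius reciprocity and Mackey's formula once one observes that only the trivial double coset contributes, because $\theta^{g} \neq \theta$ for $g \notin I$. Gallagher's theorem is proved by computing $\langle \varphi\tau, \varphi\tau' \rangle_{I} = \langle \tau, \tau' \rangle_{I/N}$, using that $|\varphi|^{2}$ is constant on cosets of $N$ (a consequence of $\varphi_{N}=\theta$ being irreducible) together with the orthogonality relations for $\Irr(I/N)$. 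None of these steps is deep, but care is needed with Frobenius reciprocity in the slightly asymmetric form $\langle \psi_{N}, \theta \rangle = \langle \psi, \theta^{G} \rangle$ used throughout.
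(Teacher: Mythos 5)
Your proof is correct, but note that the paper does not prove this lemma at all: it is quoted verbatim from Huppert's book (Theorems 19.6 and 21.2), which are precisely the Clifford correspondence and its Gallagher-type refinement, and your argument is the standard textbook derivation of exactly those two statements. The only imprecision is in your parenthetical sketch of Gallagher's theorem: $|\varphi|^{2}$ is not constant on cosets of $N$; what the irreducibility of $\varphi_{N}$ gives is that the \emph{average} of $|\varphi|^{2}$ over each coset of $N$ equals $1$, which is what the inner-product computation actually requires.
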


A character $\chi\in \textrm{\Irr}(G)$ is said to be \emph{isolated} in $G$ if $\chi(1)$ is divisible by no proper non-trivial character degree of $G$ and no proper multiple of $\chi(1)$ is a character degree of $G$. In this situation, we also say that $\chi(1)$ is an \emph{isolated degree} of $G$.
We define a \emph{proper power} degree of $G$ to be a character degree of $G$ of the form $c^{a}$ for integers $c$ with $a>1$. A \emph{mixed} degree of $G$ is a character degree which is divisible by at least two distinct primes.  

\begin{lemma}[{\cite[Lemma~3]{a:HT-Moster}}]\label{lem:factsolv} Let $G/N$ be a solvable factor group of
	$G$ minimal with respect to being non-abelian. Then two cases can occur.
	\begin{enumerate}
		\item[\rm (i)] $G/N$ is an $r$-group for some prime $r$. In this case, $G$ has a proper prime power degree.
		Hence there exists $\psi\in {\rm{\Irr}}(G/N)$ such that $\psi(1)=r^b>1$. If $\chi\in {\rm{\Irr}}(G)$ and $r\nmid \chi(1)$, then $\chi\tau\in {\rm{\Irr}}(G)$ for all $\tau\in {\rm{\Irr}}(G/N)$;
		\item[\rm (ii)] $G/N$ is a Frobenius group with an elementary abelian Frobenius kernel $F/N$. Then $e:=|G:F|\in {\rm{\cd}}(G)$ and $|F/N|=r^a$ for some prime $r$ and $a$ is the smallest integer such that $r^a\equiv 1 \mod{e}$. If $\psi\in {\rm{\Irr}}(F)$, then either $f\psi(1)\in {\rm{\cd}}(G)$ or $r^a$ divides $\psi(1)^2$. In the latter case, $r$ divides $\psi(1)$.
		\begin{enumerate}
			\item[\rm (1)] If $\chi\in {\rm{\Irr}}(G)$ such that no proper multiple of $\chi(1)$ is in ${\rm{\cd}}(G)$, then either $e$ divides $\chi(1)$, or $r^a$ divides $\chi(1)^2$;
			\item[\rm (2)] If $\chi\in {\rm{\Irr}}(G)$ is isolated, then $e=\chi(1)$ or $r^a\mid \chi(1)^2$.
		\end{enumerate}
	\end{enumerate}
\end{lemma}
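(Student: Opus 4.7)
The plan is to reduce to studying $\bar{G}:=G/N$ as a solvable non-abelian group every proper quotient of which is abelian, then run the classical dichotomy for such groups, and finally extract the character-theoretic consequences by Clifford theory together with Lemma~\ref{lem:gal} and Lemma~\ref{lem:clif}.

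First I would observe that by the minimality of $G/N$ among non-abelian solvable quotients, every proper normal subgroup of $\bar{G}$ contains $\bar{G}'$, so $M:=\bar{G}'$ is the unique minimal normal subgroup of $\bar{G}$. Since $\bar{G}$ is solvable, $M$ is elementary abelian of some prime-power order $r^a$. Setting $C:=C_{\bar{G}}(M)$, I would split on whether $C=\bar{G}$ or $C<\bar{G}$. In the first case $M\leq Z(\bar{G})$ and $\bar{G}$ is nilpotent of class at most $2$; writing $\bar{G}$ as the direct product of its Sylow subgroups and invoking minimality forces only one Sylow factor to survive, so $\bar{G}$ is an $r$-group (case (i)). In the second case the minimality forces $C=M$ and the induced action of $\bar{G}/M$ on $M\setminus\{1\}$ to be fixed-point-free, because any non-trivial pointwise centralizer would furnish a proper $\bar{G}$-invariant subgroup of $M$. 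This identifies $\bar{G}$ with a Frobenius group $M\rtimes K$ with complement $K\cong\bar{G}/M$; since $M$ is irreducible and faithful as an $\mathbb{F}_r[K]$-module and an abelian Frobenius complement must be cyclic, $K$ is cyclic of some order $e$ with $a$ equal to the multiplicative order of $r$ modulo $e$ (case (ii)).

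For the character consequences in case (i), $|G/N|$ is a power of $r$; because $G/N$ is a non-abelian $r$-group it admits a non-linear irreducible character of prime-power degree $r^b>1$, which pulls back to the required $\psi$ and yields a proper prime-power degree of $G$. For $\chi\in\Irr(G)$ with $r\nmid\chi(1)$, the degree $\chi(1)$ is coprime to $|G/N|$, so Lemma~\ref{lem:gal}(i) gives $\chi_N\in\Irr(N)$ and Gallagher's theorem (Lemma~\ref{lem:gal}(ii)) then yields $\chi\tau\in\Irr(G)$ for every $\tau\in\Irr(G/N)$. For case (ii), $e=|G{:}F|\in\cd(G)$ because the non-trivial linear characters of $F/N$ lie in $G/N$-orbits of length $e$ under the fixed-point-free action of $G/F$ on $\widehat{F/N}$ and hence induce to irreducible characters of $G/N$ of degree $e$. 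Given $\psi\in\Irr(F)$ with $I:=\In_G(\psi)$: if $I=F$, then Lemma~\ref{lem:clif}(i) yields $\psi^G\in\Irr(G)$ of degree $e\psi(1)$; if $I>F$, one uses that the twisting action of $\widehat{F/N}$ on $\Irr(F)$ preserves degrees, together with the fixed-point-free action of $I/F\leq G/F$ on $\widehat{F/N}$, to compare the $\widehat{F/N}$-orbit of $\psi$ with the irreducible constituents of $\psi_N$; this forces the full $r^a$ to enter $\psi(1)^2$, and hence $r\mid\psi(1)$. Parts~(1) and~(2) then follow by picking an irreducible constituent $\psi$ of $\chi_F$ and applying this dichotomy: either $e\psi(1)\in\cd(G)$, in which case the hypothesis that no proper multiple of $\chi(1)$ lies in $\cd(G)$ forces $e\mid\chi(1)$ (and $e=\chi(1)$ when $\chi$ is isolated), or $r^a\mid\psi(1)^2\mid\chi(1)^2$.

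I expect the main obstacle to be the Clifford-theoretic step producing $r^a\mid\psi(1)^2$ when $I>F$ in case (ii): it hinges on relating the stabilizer $I/F$ of $\psi$ to the fixed-point-free action of the complement on $\widehat{F/N}$ and on the constituents of $\psi_N$, and is the only place where the full Frobenius structure really enters the character count. The remaining group-theoretic dichotomy and the case (i) conclusions reduce to Schur--Zassenhaus and direct applications of Lemmas~\ref{lem:gal} and~\ref{lem:clif}.
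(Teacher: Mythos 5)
You should first note that the paper does not actually prove this lemma: it is quoted (with $f$ renamed to $e$, except for one leftover ``$f\psi(1)$'' in the statement) from Tong-Viet and Wakefield, and ultimately goes back to Huppert and to Isaacs' Lemma~12.3, so the comparison is with that standard proof rather than with an argument appearing in this paper. Your reduction of $\bar{G}=G/N$ to the dichotomy ``$r$-group, or Frobenius group with elementary abelian kernel $\bar{G}'$ and cyclic complement'', your treatment of case (i), your proofs that $e\in\cd(G)$ and that $a=\mathrm{ord}_e(r)$, and your derivation of parts (1) and (2) from the central dichotomy are all essentially the standard route and are sound, modulo small slips: ``every proper normal subgroup contains $\bar{G}'$'' should read ``every non-trivial normal subgroup'', and the step ``$C_{\bar{G}}(M)<\bar{G}$ forces $C_{\bar{G}}(M)=M$'' is not immediate from minimality alone --- a priori $C_{\bar{G}}(M)$ could be a larger normal $r$-subgroup, and one needs the complement to $M$ supplied by $M\not\leq\Phi(\bar{G})$ (or an equivalent argument) to rule this out.

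The genuine gap is exactly where you predicted it, and the fix you sketch does not work, because the dichotomy is not ``$\In_G(\psi)=F$ or $r^a\mid\psi(1)^2$'': the character $\psi=1_F$ has $\In_G(\psi)=G>F$ and $\psi(1)^2=1$, yet it is the first alternative $e\cdot 1=e\in\cd(G)$ that holds, witnessed by inducing a non-trivial \emph{translate} $\psi\lambda$ with $\lambda\in\Irr(F/N)$. So the case split must be made on the whole coset $\psi\cdot\Irr(F/N)$, not on $\In_G(\psi)$. The standard argument runs as follows. Put $S=\{\lambda\in\Irr(F/N):\psi\lambda=\psi\}$ and write $\psi_N=c(\theta_1+\cdots+\theta_m)$; Frobenius reciprocity gives $|S|=[\psi_N,\psi_N]=c^2m$, whence $|S|$ divides $\psi(1)^2=|S|\,m\,\theta_1(1)^2$. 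If some translate $\psi\lambda_0$ has inertia group $F$, then $(\psi\lambda_0)^G\in\Irr(G)$ has degree $e\psi(1)$ and the first alternative holds. Otherwise each of the $r^a/|S|$ distinct translates is fixed by some non-trivial element of the stabilizer $G_0/F$ of the orbit; since every non-trivial element of $G_0/F$ fixes exactly one translate (by the fixed-point-free action on $\Irr(F/N)$ and the $G_0$-invariance of $S$), counting incidences gives $r^a/|S|\leq|G_0/F|-1$, while Burnside's lemma gives $|G_0/F|$ divides $r^a/|S|-1$; these are compatible only when $r^a/|S|=1$, i.e.\ $|S|=r^a$, which yields $r^a\mid\psi(1)^2$ and hence $r\mid\psi(1)$. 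Your sketch neither extracts the divisibility $|S|\mid\psi(1)^2$ from the constituents of $\psi_N$ nor quantifies over the translates, and it attaches the conclusion $r^a\mid\psi(1)^2$ to a hypothesis ($\In_G(\psi)>F$) under which it is false; as written, the central claim of case (ii) is therefore not established.
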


\begin{table}
	\centering 
	\small
	\caption{Character degrees of finite simple groups of Lie type.}\label{tbl:simple}%
	\begin{tabular}{@{}llll@{}}
		\hline
		$S$ &
		Conditions &
		Label&
		Degree \\
		\hline
		%
		$\PSL_{m}(q)$&
		$m\geq 2$, $q\geq 5$ if $m=2$, $(m,q)\neq (3,2), (4,2)$& $\St$ &
		$q^{\frac{m(m-1)}{2}}$ \\
		&
		$m\geq 4$&
		$(1,m)$&
		$\frac{q(q^{m-1}-1)}{q-1}$\\
		$\PSU_{m}(q)$&
		$m\geq 3$, $(m,q)\neq (3,2),(4,2)$&
		$\St$ &
		$q^{\frac{m(m-1)}{2}}$  \\
		&
		$m\geq 4$&
		$(1,m)$&
		$\frac{q(q^{m-1}-(-1)^{m-1})}{q+1}$ \\
		$\PSp_{2m}(q)$&
		$m\geq 2$, $(m,q)\neq (2,2)$ & $\St$ &
		$q^{m^{2}}$ \\
		&
		$m\geq 2$&
		$\left(
		\begin{matrix}
			0 & m \\
			1& \\	
		\end{matrix}
		\right)$ &
		$\frac{q(q^{m}-1)(q^{m-1}+1)}{q-1}$ \\
		$\POm_{2m+1}(q)$ &
		$m\geq 3$, $q$ odd & $\St$ &
		$q^{m^{2}}$ \\
		&
		$m\geq 3$, $q$ odd&
		$\left(
		\begin{matrix}
			0 & m \\
			1& \\	
		\end{matrix}
		\right)$ &
		$\frac{q(q^{m}-1)(q^{m-1}+1)}{q-1}$ \\
		$\POm^{\epsilon}_{2m}(q)$ &
		$m\geq 4$, $\epsilon=\pm$ & $\St$ &
		$q^{m(m-1)}$ \\
		$^{2}\B_{2}(q)$ &
		$q=2^{2a+1}\geq 8$ & $\St$ &
		$q^{2}$ \\
		$^{3}\D_{4}(q)$ &
		& $\St$ &
		$q^{12}$ \\
		$\E_{6}^{\epsilon}(q)$ &
		$\epsilon=\pm$ & $\St$ &
		$q^{36}$ \\
		%
		%
		$\E_{7}(q)$ &
		& $\St$ &
		$q^{63}$  \\
		&
		&
		$\Phi_{7,46}$ &
		$q^{46}\Phi_{7}\Phi_{12}\Phi_{14}$ \\
		$\E_{8}(q)$ &
		& $\St$ &
		$q^{120}$  \\
		$\F_{4}(q)$ &
		& $\St$ &
		$q^{24}$  \\
		$^{2}\F_{4}(q)$ &
		$q=2^{2a+1}\geq 8$ & $\St$ &
		$q^{12}$ \\
		$\G_{2}(q)$ &
		$q\geq 3$ & $\St$ &
		$q^{6}$ \\
		$^{2}\G_{2}(q)$&
		$q=3^{2a+1}\geq 27$ & $\St$ &
		$q^{3}$ \\
		\hline
		\multicolumn{4}{l}{Note: $\Phi_{i}$ is the $i$th cyclotomic polynomial in terms of $q$.}
	\end{tabular}	
\end{table}

\begin{lemma}[{\cite[Lemma 5]{a:Bia}}] \label{lem:exten-2}
	Let $N$ be a minimal normal subgroup of $G$ so that $N\cong S^k$, where $S$ is a non-abelian simple group. If
	$\theta\in \Irr(S)$ extends to $\Aut(S)$, then $\theta^k\in \Irr(N)$ extends to $G$.
\end{lemma}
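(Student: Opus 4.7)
The plan is to exploit the wreath-product structure induced by the conjugation action of $G$ on the simple direct factors of $N$. Write $N = S_1 \times \cdots \times S_k$ with each $S_i \cong S$. Since $N$ is minimal normal in $G$, conjugation yields a transitive $G$-action on $\{S_1, \ldots, S_k\}$. Set $T = N_G(S_1)$, so that $[G:T] = k$, and choose coset representatives $g_1 = 1, g_2, \ldots, g_k$ of $T$ in $G$ with $S_i = g_i^{-1} S_1 g_i$. Let $\theta_i \in \Irr(S_i)$ correspond to $\theta$ under the identification $S_i \cong S$; the target character is $\theta^k = \theta_1 \times \cdots \times \theta_k \in \Irr(N)$, which is $G$-invariant because $\theta$ is $\Aut(S)$-invariant (a consequence of its extending to $\Aut(S)$).

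The first step is to lift $\theta_1$ to an irreducible character $\psi$ of $T$ extending the inflation of $\theta_1$ to $T \cap N = S_1 \times \prod_{i \geq 2} S_i$. The conjugation action of $T$ on $S_1$ factors as $T \twoheadrightarrow T/C_T(S_1) \hookrightarrow \Aut(S_1) \cong \Aut(S)$, so composing the hypothesised extension $\widetilde\theta \in \Irr(\Aut(S))$ with these two maps produces the desired $\psi \in \Irr(T)$. Since the factors $S_2, \ldots, S_k$ centralise $S_1$ inside $N$, they lie in $C_T(S_1)$, hence $\psi$ is trivial on $\prod_{i \geq 2} S_i$ and restricts on $T \cap N$ to $\theta_1 \times 1 \times \cdots \times 1$, as required.

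The second step is to tensor-induce $\psi$ from $T$ to $G$. For each $g \in G$ let $\sigma_g \in \mathrm{Sym}(k)$ be the permutation determined by $g g_i T = g_{\sigma_g(i)} T$, and set $t_i(g) = g_{\sigma_g(i)}^{-1} g g_i \in T$. Define the class function
\[
\Psi(g) \;=\; \prod_{i=1}^k \psi\bigl(t_i(g)\bigr).
\]
A standard cocycle computation shows that $\Psi$ is multiplicative and is in fact an irreducible character of $G$ extending $\theta^k$. For $n \in N$ the permutation $\sigma_n$ is the identity, $t_i(n) = g_i^{-1} n g_i \in T \cap N$, and, on writing $n = n_1 \cdots n_k$ with $n_j \in S_j$, the element $g_i^{-1} n g_i$ has $S_1$-component equal to the image of $n_i$ under the identification $S_i \to S_1$; since $\psi$ kills the remaining factors one obtains $\Psi(n) = \prod_i \theta_i(n_i) = \theta^k(n)$.

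The main obstacle is verifying that the class function $\Psi$ is genuinely an irreducible character of $G$, not merely a virtual or projective one. This is where the hypothesis that $\theta$ extends to the full group $\Aut(S)$ (rather than being merely $\Aut(S)$-invariant) is essential: it guarantees that the Schur-multiplier obstruction attached to the wreath product $\Aut(S) \wr \mathrm{Sym}(k)$ acting on $N$ vanishes, so the tensor-induction construction produces an honest character. This is the classical Clifford-theoretic content carried out in detail in Bianchi's paper, and is a special case of the general Dade tensor-induction machinery. Once that point is granted, the two computations above (restriction to $T \cap N$ giving the correct inflation, and restriction to $N$ recovering $\theta^k$) complete the proof.
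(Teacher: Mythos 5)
The paper does not actually prove this lemma; it quotes it from Bianchi--Chillag--Lewis--Pacifici, where the argument runs through the embedding $G/\C_G(N)\hookrightarrow \Aut(N)\cong \Aut(S)\wr \mathrm{Sym}(k)$ and the standard fact that $\widetilde\theta^{\otimes k}$ extends to that wreath product. Your tensor-induction-from-$N_G(S_1)$ route is the same construction seen from the subgroup side, and your Step 1 is correct: pulling $\widetilde\theta$ back along $T\twoheadrightarrow T/\C_T(S_1)\hookrightarrow \Aut(S_1)$ gives an irreducible $\psi\in\Irr(T)$ (irreducible because the image contains $\mathrm{Inn}(S_1)$ and $\widetilde\theta$ restricts to $\theta$ there) with $\psi|_N=\theta_1\times 1\times\cdots\times 1$. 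This is also exactly where the hypothesis that $\theta$ \emph{extends} to $\Aut(S)$, rather than merely being $\Aut(S)$-invariant, is consumed.

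Step 2 contains a genuine error: the displayed formula is not the character of the tensor-induced module. The correct formula groups the elements $t_i(g)$ along the cycles of $\sigma_g$ and applies $\psi$ once per cycle to the ordered product around that cycle, i.e.\ a factor $\psi\bigl(t_{i_r}(g)\cdots t_{i_1}(g)\bigr)$ for each cycle $(i_1,\dots,i_r)$, not a factor $\psi(t_i(g))$ for each index $i$. These differ as soon as $\psi(1)>1$ and $\sigma_g$ has a cycle of length $\geq 2$: for $[G:T]=2$, $\psi(1)=2$ and $g\notin T$ with $g^2=1$, your formula gives $\psi(1)^2=4$, whereas the trace of the factor-swapping action on $V\otimes V$ is $2$. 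So $\Psi$ as written is not a character, and the claim that it is ``multiplicative'' cannot be repaired. The slip is harmless for the only value you actually need --- on $N$ every cycle of $\sigma_n$ is a singleton, so the two formulas coincide and the restriction is $\theta^k$ --- but the argument should be phrased by tensor-inducing the \emph{module} $V$ affording $\psi$; then $(V^{\otimes G})|_N\cong\bigotimes_i V^{g_i}|_N$ has character $\theta^k$, and irreducibility of the extension is automatic from irreducibility of $\theta^k$.

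Relatedly, your final paragraph misplaces the role of the hypothesis. Tensor induction of an honest $T$-module always produces an honest $G$-module; there is no Schur-multiplier or cocycle obstruction to discharge at that stage, so no appeal to ``extends rather than merely invariant'' is needed there, and the vanishing claim you assert is neither needed nor proved. The hypothesis is used only in Step 1: mere invariance of $\theta$ under $\Aut(S)$ would not let you extend $\theta_1$ to $T$, and that is the only point where a projective obstruction could arise. Replace Step 2 by genuine tensor induction of the module and delete the obstruction paragraph, and the proof is complete.
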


\begin{lemma}[{\cite[Lemma 6]{a:Hupp-I}}]\label{lem:schur}
	Suppose that $M\unlhd G'=G''$ and for every $\lambda\in {\rm{\Irr}}(M)$ with $\lambda(1)=1$, $\lambda^g=\lambda$ for all $g\in G'$. Then $M'=[M,G']$ and $|M/M'|$ divides the order of the Schur multiplier of $G'/M$.
\end{lemma}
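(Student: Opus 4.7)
The plan is to prove the two assertions in sequence: first the commutator identity $M' = [M, G']$, and then the divisibility of $|M/M'|$ by the order of the Schur multiplier of $G'/M$.

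For the commutator identity, the inclusion $M' \subseteq [M, G']$ is immediate since $M \leq G'$. For the reverse, I would translate the hypothesis into a statement about $M/M'$. The irreducible characters of $M$ of degree one are precisely those inflated from the abelian quotient $M/M'$, and their common kernel equals $M'$. The hypothesis that every such $\lambda \in \Irr(M)$ satisfies $\lambda^g = \lambda$ for all $g \in G'$ reads $\lambda(gmg^{-1}) = \lambda(m)$ for every $g \in G'$, $m \in M$, i.e.\ $\lambda([g, m^{-1}]) = 1$ for every linear $\lambda$. This forces each commutator $[g, m]$ with $g \in G'$, $m \in M$ to lie in $\bigcap_{\lambda(1)=1} \ker \lambda = M'$, giving $[M, G'] \subseteq M'$ and hence equality.

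Having established $M' = [M, G']$, I would consider the short exact sequence
$$ 1 \longrightarrow M/M' \longrightarrow G'/M' \longrightarrow G'/M \longrightarrow 1. $$
The commutator identity says that $M/M'$ is \emph{central} in $G'/M'$, so this is a central extension. Moreover, $G' = G''$ is perfect, and hence so is the quotient $G'/M'$; in particular, $M/M' \leq G'/M' = (G'/M')'$. Therefore the displayed sequence is a \emph{stem extension} of $G'/M$ by $M/M'$, and the classical theory of Schur multipliers identifies the kernel of any stem extension as a homomorphic image of the multiplier of the base. It follows that $M/M'$ is a quotient of $\mathrm{Mult}(G'/M)$, so $|M/M'|$ divides $|\mathrm{Mult}(G'/M)|$ as required.

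The only genuinely nontrivial step is the last one, and it can be justified directly from the five-term inflation--restriction sequence in integral homology attached to the central extension above:
$$ H_2(G'/M', \mathbb{Z}) \longrightarrow H_2(G'/M, \mathbb{Z}) \longrightarrow M/M' \longrightarrow H_1(G'/M', \mathbb{Z}) \longrightarrow H_1(G'/M, \mathbb{Z}) \longrightarrow 0. $$
Perfection of $G'/M'$ gives $H_1(G'/M', \mathbb{Z}) = (G'/M')^{\mathrm{ab}} = 0$, so exactness produces the desired surjection $\mathrm{Mult}(G'/M) = H_2(G'/M, \mathbb{Z}) \twoheadrightarrow M/M'$. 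The character-theoretic half of the argument, by contrast, is essentially a one-line observation using that linear characters of $M$ separate the points of $M/M'$; the main obstacle, such as it is, is simply the invocation of the stem-extension/Schur-multiplier correspondence, which I would cite rather than reprove.
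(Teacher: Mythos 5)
Your argument is correct. Note that the paper itself gives no proof of this lemma---it is quoted verbatim from Huppert's Lemma~6 in \cite{a:Hupp-I}---and your two steps (linear characters of $M$ separate the points of $M/M'$, forcing $[M,G']\leq\bigcap_{\lambda(1)=1}\ker\lambda=M'$; then the central extension $1\to M/M'\to G'/M'\to G'/M\to 1$ with $G'/M'$ perfect is a stem extension, so $M/M'$ is an epimorphic image of the Schur multiplier, e.g.\ via the five-term exact sequence) are essentially Huppert's original proof, so there is nothing to add.
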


\begin{lemma}\label{lem:exten} If $S$ is a non-abelian simple group, then there exists a non-trivial irreducible character of $S$
	that extends to $\Aut(S)$. Moreover, the following holds:
	\begin{enumerate}[\rm (i)]
		\item[\rm (i)] if $S$ is an alternating group $\A_{n}$ of degree at least $8$, then $S$ has an irreducible character degree divisible by $16$;
		\item[\rm (ii)] if $S$ is a simple group of Lie type as in {\rm Table~\ref{tbl:simple}} and $\chi$ is an irreducible character of $S$ labeled as in the third column of {\rm Table~\ref{tbl:simple}}, then  $\chi$ extends to $\Aut(S)$;
		\item[\rm (iii)] if $S$ is the Tits group $^{2}\F_{4}(2)'$ or a sporadic simple group except for $\M_{22}$, then $S$ has an irreducible character degree divisible by $16$ unless $S$ is $\J_{1}$ or $\M_{22}$.
	\end{enumerate}
	Furthermore, all the characters in {\rm (i)} and {\rm (iii)} are extendible to $\Aut(S)$.
\end{lemma}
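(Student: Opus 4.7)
The plan is to treat the four assertions separately by reducing each to a known fact in the character theory of finite non-abelian simple groups, dealing with the three families alternating, Lie type, and sporadic/Tits in turn.

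For the opening existence statement (every non-abelian simple $S$ admits a non-trivial character extending to $\Aut(S)$) and for the $\St$-rows of clause~(ii) simultaneously, I would invoke the Steinberg character $\St_{S}$ whenever $S$ is of Lie type; $\St_{S}$ is $\Aut(S)$-invariant and extends to $\Aut(S)$ by Steinberg's theorem. For $S=\A_{n}$ one picks any partition $\lambda\vdash n$ with $\lambda\neq\lambda'$; then $\chi^{\lambda}|_{\A_{n}}$ is irreducible and extends tautologically to $\S_{n}=\Aut(\A_{n})$ (for $n\neq 6$, with $n=6$ treated by direct inspection). For the $26$ sporadic groups and the Tits group existence is read off the ATLAS.

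For the remaining entries in clause~(ii), namely the unipotent characters $(1,m)$ for $\PSL_{m}(q)$ and $\PSU_{m}(q)$, the character $\bigl(\begin{smallmatrix}0 & m\\ 1 & \end{smallmatrix}\bigr)$ for $\PSp_{2m}(q)$ and $\POm_{2m+1}(q)$, and $\Phi_{7,46}$ for $\E_{7}(q)$, I would first observe via Lusztig's parametrisation of unipotent characters that each of these is stable under every diagonal, field, and graph automorphism, hence under $\Aut(S)$; then invoke the theorem of Malle (as refined by Späth) asserting that every $\Aut(S)$-invariant unipotent character of a finite simple group of Lie type extends to its inertia group, which in our situation is the full $\Aut(S)$.

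For clause~(i), for each $n\geq 8$ I would exhibit a self-non-conjugate partition $\lambda\vdash n$ with $\lambda\neq\lambda'$ and $16\mid\chi^{\lambda}(1)$; then $\chi^{\lambda}|_{\A_{n}}$ is an irreducible character of $\A_{n}$ of degree divisible by $16$ that extends to $\S_{n}=\Aut(\A_{n})$ as above. A uniform $2$-adic divisibility bound (via Macdonald's formula for $\nu_{2}(\chi^{\lambda}(1))$ applied to a convenient family such as $\lambda=(n-4,4)$, with small cases $n=8,\ldots,11$ handled by direct inspection of the character table) supplies the required $16$-divisibility. For clause~(iii), the sporadic groups and the Tits group form a finite list; for each $S\notin\{\J_{1},\M_{22}\}$ I would locate in the ATLAS an irreducible character of degree divisible by $16$, check that it is fixed by $\Out(S)$, and verify (from the character table of $\Aut(S)$ or via the Isaacs--Gallagher extension criterion) that it extends.

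The principal obstacle is the uniform $2$-adic divisibility in~(i): the hook-length formula does not by itself deliver a clean $16\mid\chi^{\lambda}(1)$ for arbitrary $n$, so the argument requires either Macdonald's formula for the $p$-adic valuation of $\chi^{\lambda}(1)$ in terms of the $2$-core of $\lambda$ or a tailored family of partitions whose hook-lengths are explicit. The remaining parts are essentially bookkeeping of known references.
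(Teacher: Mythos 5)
Your overall architecture matches the paper's: part (ii) via Steinberg's/Schmid's extension theorem for $\St$ plus Malle's extension theorem for the listed unipotent characters, part (iii) by finite inspection via the \ATLAS and \GAP, and the opening existence statement (which the paper simply quotes from \cite[Lemma 5]{a:Bia}) by the same standard devices. The one place where real work is needed is part (i), and there your proposal has a genuine gap.

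Concretely, the family $\lambda=(n-4,4)$ does not do what you want: $\chi^{(n-4,4)}(1)=\binom{n}{4}-\binom{n}{3}=\frac{n(n-1)(n-2)(n-7)}{24}$, which equals $14$ for $n=8$, $90$ for $n=10$, and $275$ (odd!) for $n=12$; its $2$-adic valuation is $\nu_{2}(n)+\nu_{2}(n-2)-3$ for even $n$ and is bounded for odd $n$ as well, so no finite list of exceptional small cases rescues it, and Macdonald's formula applied to this one shape will only confirm the failure. The point --- which you correctly identify as the principal obstacle but do not resolve --- is that no single two-row family works uniformly; one must let the partition vary with the residue of $n$. The paper does exactly this using the three-row characters $\chi_{r,s}$ of Manz--Staszewski--Willems attached to $(n-s-r,s+1,r-1)$, with degree $\binom{n}{s}\binom{n-s-1}{r-1}\frac{n-2s-r}{r+s}$: it takes $\chi_{1,2}$ when $n=4t+1$, $\chi_{3,2}$ when $n=4t+3$, and $\chi_{2,1}$ when $n=2t$, in each case the degree carrying an explicit factor $8t(t-1)(\cdots)$ whose remaining product of consecutive integers supplies the last factor of $2$; these shapes also avoid the two exceptional cases in which $\chi_{r,s}$ fails to restrict irreducibly to $\A_{n}$ (a point you handle correctly via $\lambda\neq\lambda'$). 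To complete your write-up you would need to replace the $(n-4,4)$ suggestion by such a residue-dependent choice of partitions, or carry out the $2$-core/$2$-quotient analysis in full; as it stands the key divisibility claim is asserted rather than proved.
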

\begin{proof}
	The existence of an extendible character of $S$ to $\Aut(S)$ follows from~\cite[Lemma 5]{a:Bia}. \smallskip
	
	\noindent (i)   For a partition $(n-s-r,s+1,r-1)$ of $n\geq 8$, it follows from \cite{a:Manz-1988} that $\S_{n}$ has an irreducible character labeled by $\chi_{r,s}$, where $r\geq 1$, $s\geq 0$ and $r+2s+1\leq n$. The character $\chi_{r,s}$ restricts irreducibly to $\A_{n}$ except when $s=0$ and $n=2r+1$ or $s=1$ and $n=2r+2$, and its degree is
	\begin{align*}
		\chi_{r,s}(1)=\binom{n}{s}\binom{n-s-1}{r-1}\frac{n-2s-r}{r+s}.
	\end{align*}
	If $n=4t+1$ for some $t\geq 2$, then $16$ divides $8t(t-1)$, and so $\chi_{1,2}(1)=8t(t-1)(4t+1)/3$ is divisible by $16$. If $n=4t+3$ for some $t\geq 2$, then $16$ divides $\chi_{3,2}(1)=8t(t-1)(2t+1)(4t+3)(4t-1)/5$, and if $n=2t$ for some $t\geq 4$, then $16$ divides $\chi_{2,1}(1)=8t(t-1)(t-2)/3$. Therefore, for a given $n$, at least one of the  degrees $\chi_{1,2}(1)$, $\chi_{2,1}(1)$ and $\chi_{3,2}(1)$ is divisible by $16$. \smallskip
	
	\noindent (ii) For a finite simple group $S$ listed as in the first column of Table~\ref{tbl:simple}, the irreducible character labeled by $\St$ in Table~\ref{tbl:simple} is the  Steinberg character of $S$, and by \cite{a:Schmid85,a:Schmidt92},  $\St$ extends to $\Aut(S)$. The remaining characters with degrees in Table~\ref{tbl:simple} are the unipotent characters of $S$ which are extendible to $G$ by \cite[Theorems 2.4-2.5]{a:Malle-2008-ExtUnip}. \smallskip
	
	\noindent (iii) This part follows from \ATLAS \cite{b:Atlas} and \GAP \cite{GAP}.
\end{proof}

\begin{lemma}\label{lem:as}
	Let $G$ be a finite group with $G'$ being a non-abelian finite simple group. Then $G'\C_G(G')=G'\times \C_G(G')$, $\C_G(G')=\Z(G)$ and $G/\Z(G)$ is an almost simple group with socle $G'$.   	
\end{lemma}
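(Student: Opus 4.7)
The plan is to prove the three assertions in order; each is short once the right commutator observation is in hand. First I would note that $G' \cap \C_G(G')$ equals $\Z(G')$, which is trivial because $G'$ is a non-abelian simple group. Both $G'$ and $\C_G(G')$ are normal in $G$ with trivial intersection, so their product is an internal direct product $G' \times \C_G(G')$.

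For the equality $\C_G(G') = \Z(G)$, the inclusion $\Z(G) \leq \C_G(G')$ is immediate. For the reverse, I would take arbitrary $c \in \C_G(G')$ and $g \in G$ and evaluate the commutator $[g,c]$ in two ways: since $\C_G(G') \unlhd G$, we have $[g,c] \in \C_G(G')$, while by definition $[g,c] \in [G,G] = G'$. Combining with the first step, $[g,c] \in G' \cap \C_G(G') = 1$, so $c$ commutes with every $g \in G$ and hence lies in $\Z(G)$. This double-membership trick is the main point of the argument.

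For the last assertion, conjugation gives a homomorphism $G \to \Aut(G')$ whose kernel is precisely $\C_G(G') = \Z(G)$, so $G/\Z(G)$ embeds into $\Aut(G')$. The image of $G'$ under this map equals $\mathrm{Inn}(G') \cong G'/\Z(G') \cong G'$, and since $G'$ is non-abelian simple, this image is the unique minimal normal subgroup (socle) of any intermediate group between $\mathrm{Inn}(G')$ and $\Aut(G')$. Hence $G/\Z(G)$ is almost simple with socle $G'$.

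There is essentially no serious obstacle here: the proof is a clean three-step argument, and the only point requiring any care is noticing that a commutator $[g,c]$ with $c \in \C_G(G')$ lies simultaneously in $G'$ and in $\C_G(G')$, which by the first step forces it to be trivial.
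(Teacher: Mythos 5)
Your proof is correct and follows essentially the same route as the paper: trivial intersection via simplicity of $G'$, the observation that $[G,\C_G(G')]\leq G'\cap\C_G(G')=1$ to get $\C_G(G')=\Z(G)$, and the conjugation action to embed $G/\Z(G)$ into $\Aut(G')$. You merely spell out the double-membership commutator step that the paper leaves implicit.
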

\begin{proof}
	Let $A:= \C_G(G')$ and $T:=G'A$. Since $G'$ is a non-abelian simple group, $G'\cap A=1$, and hence $G'A=G'\times A$. As $G'\cap A=1$, it also follows that $[G,A]=1$, and hence $A=\Z(G)$. Moreover, $G'\cong G'\Z(G)/\Z(G)\unlhd G/\Z(G)\leq \Aut(G')$, that is to say, $G/\Z(G)$ is an almost simple group with socle $G'$.	
\end{proof}

\section{Proof of the main result}\label{sec:proof}

In this section, we prove Theorem \ref{thm:main}. We will denote the small Ree group by $^{2}\G_{2}(q)$ with $q=3^{2n+1}\geq 27$. These groups have been introduced by Ree in \cite{a:Ree-60,a:Ree-61}. We mainly follow the description of these groups and their subgroups given in \cite{a:Kemper-Ree,a:Levchuk-Ree,a:Ward-66} with a few exceptions in our notation. In what follows, we provide some information on small Ree groups and their character degrees.

\begin{lemma}[{\cite[Table 5.1.A and Theorem 5.1.4]{b:KL-90}}]\label{lem:out}
	Let $H_{0}=\, ^{2}\G_{2}(q)$ with $q=3^{f}\geq 27$ and $f$ odd. Then the outer automorphism group $\Out(H_{0})$ of $H_{0}$ is isomorphic to the  cyclic group $\Cyc_{f}$ generated by the field automorphism and the Schur multiplier $\Msf(H_{0})$ of $H_{0}$ is trivial.
\end{lemma}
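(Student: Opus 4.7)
The plan is to appeal to the standard structure theory of automorphism groups of finite simple groups of Lie type, due to Steinberg, and to the classification of their Schur multipliers, due to Steinberg and Griess. Both statements are tabulated in \cite{b:KL-90}, so once the set-up is in place the result is a direct read-off, and I will only sketch the reasoning behind each entry.

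First I would address the outer automorphism group. Recall Steinberg's theorem that, for a finite simple group of Lie type $L$ defined over $\mathbb{F}_{q}$ with $q=p^{f}$, the group $\Aut(L)$ is generated by the inner automorphisms together with diagonal, field and graph (or graph-field) automorphisms. For $^{2}\G_{2}(q)$, the ambient algebraic group $\G_{2}$ is simultaneously simply connected and of adjoint type, so its fundamental group is trivial and there are no diagonal automorphisms. Moreover, the twisting that defines $^{2}\G_{2}$ already incorporates the unique non-trivial symmetry of the $\G_{2}$ Dynkin diagram composed with an appropriate Frobenius power, so no further graph or graph-field automorphisms survive. Consequently every outer automorphism comes from an ordinary field automorphism of $\mathbb{F}_{q}$, and these form a cyclic group of order $f$; hence $\Out(H_{0})\cong \Cyc_{f}$.

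For the Schur multiplier, I would invoke Griess's computation of the Schur multipliers of finite simple groups of Lie type. The generic value of $\Msf(H_{0})$ equals the fundamental group of $\G_{2}$, which is trivial as noted above. The only possible deviation is through an exceptional Schur multiplier, and for the family $^{2}\G_{2}(q)$ the tables in \cite{b:KL-90} show that no exceptional multiplier occurs for $q=3^{f}\geq 27$; the only non-generic case would be $q=3$, which lies outside our range and in any event corresponds to a group that is not simple. Thus $\Msf(H_{0})=1$.

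The main obstacle here is essentially bookkeeping: once the generic structure of the automorphism group and Schur multiplier of a twisted Chevalley group is in hand, the lemma is just the specialisation to $^{2}\G_{2}(q)$, and the cleanest presentation is simply to cite \cite[Table~5.1.A and Theorem~5.1.4]{b:KL-90}.
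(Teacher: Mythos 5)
Your proposal is correct and matches the paper's treatment: the paper gives no proof at all, simply attributing the lemma to \cite[Table 5.1.A and Theorem 5.1.4]{b:KL-90}, which is exactly the citation you land on. Your sketch of the underlying Steinberg/Griess theory (no diagonal automorphisms since $\G_{2}$ has trivial fundamental group, no surviving graph automorphisms after twisting, and no exceptional Schur multiplier for $q\geq 27$) is accurate but supplementary to what the paper records.
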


\begin{lemma}\label{lem:deg}
	Let $H_{0}=\, ^{2}\G_{2}(q)$ with $q=3^{f}\geq 27$ and $f$ odd, and let $H_{0}\leq H\leq \Aut(H_{0})$ with $|H:H_{0}|=d$. Then
	\begin{enumerate}[\rm (i)]
		\item[\rm (i)] the irreducible character degrees of $H_{0}$ are recorded in {\rm Table~\ref{tbl:deg}}, and the irreducible characters with degrees as in one of the lines $2$, $3$, $4$ and $9$ extend to $H$ which are consequently character degrees of $H$;
		\item[\rm (ii)] if $\chi$ is an irreducible character of $H$ of prime power degree, then $\chi(1)=q^{3}$;
		\item[\rm (iii)] if $\chi$ is an irreducible character of $H$, then the $2$-part of $\chi(1)$ divides $2^{3}$;
		\item[\rm (iv)] the smallest even degree of $H$ is $\sqrt{3q}(q-1)(q+1)/3$;
		\item[\rm (v)] the group $H$ has an irreducible character of degree $(q^{3}+1)d$.
	\end{enumerate}
\end{lemma}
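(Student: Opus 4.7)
The plan is to derive all five parts from the explicit character table of $H_0={}^2\G_2(q)$ (originally due to Ward~\cite{a:Ward-66}) once its irreducible degrees have been tabulated in Table~\ref{tbl:deg}. Part (i) is a bookkeeping step, and parts (ii)--(v) are then consequences of inspection of these degrees, combined with Clifford theory applied to the cyclic extension $H/H_0\cong\Cyc_d$ coming from Lemma~\ref{lem:out}. The central principle I would invoke repeatedly is: since $H/H_0$ is cyclic of order $d\mid f$, every irreducible character of $H$ has degree of the form $\theta(1)\cdot [H:\In_H(\theta)]$ for some $\theta\in\Irr(H_0)$. Indeed, if $\theta$ extends to $\In_H(\theta)$, then Gallagher's theorem (Lemma~\ref{lem:gal}(ii)) together with induction (Lemma~\ref{lem:clif}(ii)) produces characters of $H$ of exactly that degree; otherwise Lemma~\ref{lem:clif}(i) gives the same conclusion. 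Consequently $\chi(1)/\theta(1)$ always divides $d$, hence divides $f$, and is in particular odd.

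With this setup, part (i) is immediate: the Steinberg character of degree $q^3$ extends by Lemma~\ref{lem:exten}(ii) applied to the row for ${}^2\G_2(q)$ in Table~\ref{tbl:simple}, and the other unipotent characters in lines 2, 3, 4 and 9 of Table~\ref{tbl:deg} extend by Malle's theorem on extensions of unipotent characters, cited in the proof of Lemma~\ref{lem:exten}. For part (ii), scanning Table~\ref{tbl:deg} shows that the only prime-power degree of $H_0$ is $q^3=3^{3f}$, since every other degree contains coprime factors such as $q-1$ and $q^2-q+1$ (with $\gcd(q-1,q^2-q+1)=\gcd(q-1,1)=1$), or $\sqrt{3q}$ together with $q\pm 1$, each contributing distinct primes; multiplying by an odd divisor of $f$ cannot collapse such a product into a prime power. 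For part (iii), $f$ odd gives $q=3^f\equiv 3\pmod 8$, so $(q-1)_2=2$ and $(q+1)_2=4$, while $q^2\pm q+1$ and $\sqrt{3q}=3^{(f+1)/2}$ are all odd; the maximum $2$-part in Table~\ref{tbl:deg} is therefore $2\cdot 4=2^3$, attained by the degrees divisible by $q^2-1$, and this is preserved under induction since $[H:\In_H(\theta)]$ is odd.

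For part (iv), I would compare the even degrees in Table~\ref{tbl:deg} and identify the cuspidal unipotent degree $\sqrt{3q}(q-1)(q+1)/3=3^n(q-1)(q+1)$ as the smallest (competing even candidates carry an extra factor of $q^2-q+1$, $q^2+q+1$, or similar, and are strictly larger for $q\geq 27$); this degree already appears in $H_0$ and lifts to $H$ via part (i). Part (v) is the main obstacle: to produce a character of $H$ of degree exactly $(q^3+1)d$, I would invoke the family of principal series characters of $H_0$ of degree $q^3+1$ (indexed by the non-trivial characters of a maximal torus of order $q-1$ modulo the action of the associated Weyl group), show that the field automorphism group $\Out(H_0)\cong\Cyc_f$ acts with at least one orbit of full size $d$ on this family, and apply Clifford induction (Lemma~\ref{lem:clif}(i)) to such a $\theta$. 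The delicate step is this orbit count under field automorphisms; it should follow from Ward's explicit parametrization of the torus characters, but it is the only point of the proof that is not purely bookkeeping from Table~\ref{tbl:deg}.
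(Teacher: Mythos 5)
Your treatment of parts (i)--(iv) is correct and follows essentially the same route as the paper: the reduction of every degree of $H$ to $\theta(1)\cdot|H:\In_H(\theta)|$ with $\theta\in\Irr(H_0)$ and the index an odd divisor of $f$ (valid here because $H/H_0$ is cyclic, so $\theta$ extends to its inertia group), the coprimality observations (for line~$2$ note that $q^2-q+1=(q-\sqrt{3q}+1)(q+\sqrt{3q}+1)$ is itself a product of two coprime nontrivial factors, which is how the paper rules it out as a prime power), the $2$-adic valuations $(q-1)_2=2$ and $(q+1)_2=4$, and the comparison of even degrees. None of this differs materially from the paper's argument.

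Part (v), however, contains a genuine gap, and you have flagged it yourself: the entire content of (v) is the claim that some irreducible character of $H_0$ of degree $q^3+1$ has inertia group exactly $H_0$ in $H$ (equivalently, that the field automorphism $\varphi$ of order $f$ has a regular orbit on this family), and you defer precisely this claim to ``Ward's explicit parametrization'' without carrying it out. A strategy statement is not a proof here; a priori the Galois action $\ve\mapsto\ve^{3^i}$ on the parametrizing roots of unity could have large stabilizers. The paper closes the gap concretely: taking $\eta_\ve$ attached to a $(q-1)/2$-th root of unity $\ve$ via an element $x$ of order $(q-1)/2$ in the involution centralizer $\langle\iota\rangle\times\PSL_2(q)$, one has $\eta_\ve(x^a)=\ve^a+\ve^{-a}$, and distinct $\eta_\ve$ differ only on the classes of $x^a$ and $\iota x^a$; hence $\varphi^i$ fixes $\eta_\ve$ only if $\ve^{3^i}+\ve^{-3^i}=\ve+\ve^{-1}$, which by \cite[Lemma~4.7]{a:White2013} forces $3^i\equiv\pm1\pmod{(q-1)/2}$, i.e.\ $(3^f-1)/2\mid 3^i\mp1$ --- impossible for $1\leq i<f$ since $3^i+1<(3^f-1)/2$. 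This yields $\In_H(\eta_\ve)=H_0$ and $\eta_\ve^H\in\Irr(H)$ of degree $(q^3+1)d$. To complete your proof you would need to supply this (or an equivalent) computation; also note that what you need is a regular orbit of the relevant subgroup of $\Out(H_0)$ of order $d$, which the paper obtains by showing the stabilizer of $\eta_\ve$ in the whole of $\Out(H_0)$ is already trivial.
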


\begin{table}
	\centering 
	\small 
	\caption{The character degrees of the finite simple group  $^{2}\G_{2}(q)$ with $q=3^{f}\geq 27$ and $f$ odd.}\label{tbl:deg}%
	\begin{tabular}{@{}lll@{}}
		\hline
		Line &
		Degree &
		Comment \\
		\hline
		$1$ &$1$& \\
		$2$ &$(q-\sqrt{3q}+1)(q+\sqrt{3q}+1)$ & \\
		$3$ &$\frac{\sqrt{3q}}{6}(q-1)(q-\sqrt{3q}+1)$ & The character extends to  $\Aut(^{2}\G_{2}(q))$ \\
		$4$ &$\frac{\sqrt{3q}}{6}(q-1)(q+\sqrt{3q}+1)$ & The character extends to  $\Aut(^{2}\G_{2}(q))$\\
		$5$ &$\frac{\sqrt{3q}}{3}(q-1)(q+1)$ & The character extends to $\Aut(^{2}\G_{2}(q))$\\
		$6$ &$(q-1)(q+1)(q-\sqrt{3q}+1)$ & \\
		$7$ &$(q-1)(q-\sqrt{3q}+1)(q+\sqrt{3q}+1)$ & \\
		$8$ &$q(q-\sqrt{3q}+1)(q+\sqrt{3q}+1)$ & \\
		$9$ &$q^{3}$ & The character extends to $\Aut(^{2}\G_{2}(q))$\\
		$10$ &$(q+1)(q-\sqrt{3q}+1)(q+\sqrt{3q}+1)$ & \\
		$11$ &$(q-1)(q+1)(q+\sqrt{3q}+1)$ & \\					
		\hline
	\end{tabular}
\end{table}

\begin{proof}
	\noindent (i)
	The irreducible character degrees of $H_{0}$ can be read off from \cite{a:Ward-66}. The irreducible characters whose degrees are as in one of the lines $2$, $3$, $4$ and $9$ are the unipotent characters of $H_{0}$, see \cite[pp. 488-489]{b:Carter-85}. Therefore, these characters extend to $H$ by \cite[Theorems 2.4-2.5]{a:Malle-2008-ExtUnip}.\smallskip 
	
	\noindent (ii) We know by Lemma~\ref{lem:gal} that each degree of $H$ is a multiple of some character degree of $H_{0}$. We also observe that
	\begin{align}\label{eq:gcd}
		\nonumber	&(q-1,q+1)=2,\\
		&(q+\e_{1}1,q+\e_{2}\sqrt{3q}+1)=1,\\
		\nonumber	&(q-\sqrt{3q}+1,q+\sqrt{3q}+1)=1,
	\end{align}
	where $\e_{1}=\pm$ and $\e_{2}=\pm$. Therefore, the only prime power degree of $H_{0}$ is $q^{3}$ which is also extendible to $H$. This shows that the only prime power degree of $H$ is $q^{3}$.\smallskip
	
	\noindent (iii) It follows from \cite[Theorem]{a:Ward-66} that the Sylow $2$-subgroups of $H_{0}$ are elementary abelian groups of order $8$. Indeed, the $2$-part of $q-1=3^{f}-1$ is $2$ and the  $2$-part of $q+1=3^{f}+1$ is $4$. Since $f$ is odd and the irreducible character degree $\chi(1)$ of $H$ is a multiple of a divisor of $f$ and an irreducible character degree of $H_{0}$, it follows the $2$-part of $\chi(1)$ divides $8$. 
	
	\smallskip
	
	\noindent (iv) Note that the $2$-part of $q-1=3^{f}-1$ is $2$. Then the degrees in lines $3$ and $4$ of Table~\ref{tbl:deg} are odd. Moreover, the irreducible character of $H$ whose degree is a multiple of $(q-\sqrt{3q}+1)(q+\sqrt{3q}+1)$ and a divisor of $f$ is also odd. Therefore, $\sqrt{3q}(q-1)(q+1)/3$ is the smallest even irreducible character degree of $H_{0}$ which is also a degree of $H$ by part (i).\smallskip
	
	\noindent (v) Let $\iota$ be an involution in $H_{0}$. Then the centralizer of $\iota$ is (isomorphic to) $\langle \iota \rangle\times \PSL_{2}(q)$. We now consider an element $x$ of $\PSL_{2}(q)$ of order $(q-1)/2$ which is conjugate to its inverse but to no other power. By replacing $R$ with $x$ and $r$ with $\ve$ in \cite[Theorem]{a:Ward-66}, $H_{0}$ has the irreducible character $\eta_{\ve}$ of degree $q^{3}+1$, where $\ve$ is $(q-1)/2$th root of unity. We also observe by \cite[Ch. I]{a:Ward-66} that $\eta_{\ve}(x^{a})=\ve^{a}+\ve^{-a}$ which is obtained by the irreducible character of $\PSL_{2}(q)$ denoted $\theta_{\ve}$ in \cite[Ch. I]{a:Ward-66}. Moreover, distinct characters $\eta_{\ve}$ differ only on the classes of $x^{a}\neq 1$ and $\iota x^{a}\neq \iota$, see \cite[Ch. V]{a:Ward-66}. Recall from Lemma~\ref{lem:out} that the field automorphism $\varphi$ of $H_{0}$ induced by the Frobenius automorphism of $\mathbb{F}_{q}$ is of order $f$ and generates $\Out(H_{0})$, where $q=3^{f}\geq 27$ with $f$ odd.
	We now prove that $\eta_{\ve}$ is not invariant under any outer automorphism of $H_{0}$. Assume to the contrary that $\vp^{i}$ stabilizes $\eta_{\ve}$ for some $1\leq i<f$. Then $\eta_{\ve}^{\vp^{i}}(g)=\eta_{\ve}(g)$, for all $g\in H_{0}$. In particular, $\eta_{\ve}^{\vp^{i}}(x)=\eta_{\ve}(x)$. Then  $\ve^{3^{i}}+\ve^{-3^{i}}=\ve+\ve^{-1}$. By \cite[Lemma~4.7]{a:White2013}, this is equivalent to $3^{i}\equiv \pm1  \pmod{\frac{q-1}{2}}$. So $(q-1)/2=(3^{f}-1)/2$ divides $3^{i}\mp 1$, which is impossible. Therefore, the  elements of $H_{0}$ are the only elements of $H$ which stabilize $\eta_{\ve}$, in other words, $\In_{H}(\eta_{\ve})=H_{0}$. Therefore, $\eta_{\ve}^{H}\in \Irr(H)$, and hence $(q^{3}+1)d=\eta_{\ve}^{H}(1)\in \cd(H)$, as desired. 
\end{proof}

\begin{lemma}\label{lem:maxs}
	Let $H_{0}=\,^{2}\G_{2}(q)$ with $q=3^{2n+1}\geq 27$,  and let $H_{0}\leq H\leq \Aut(H_{0})$ with $|H:H_{0}|=d$. If $K$ is a maximal subgroup of $H_{0}$ whose index in $H_{0}$ divides some degree $\chi(1)$ of $H$, then $K=[q^{3}]: \Cyc_{q-1}$ and $\chi(1)=(q^{3}+1)a$ for some divisor $a$ of $d$.
\end{lemma}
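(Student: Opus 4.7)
The plan is to enumerate the maximal subgroups of $H_{0} = \,^{2}\G_{2}(q)$ by Kleidman's classification and eliminate each except the Borel $[q^{3}]{:}\Cyc_{q-1}$ by comparing indices with the character degrees of $H$. A preliminary observation is that, because $\Out(H_{0}) = \Cyc_{f}$ is cyclic by Lemma~\ref{lem:out}, for every $\theta \in \Irr(H_{0})$ the quotient $\In_{H}(\theta)/H_{0}$ is cyclic, hence $\theta$ extends to $\In_{H}(\theta)$. By Lemma~\ref{lem:clif}(ii), every $\chi \in \Irr(H)$ lying over $\theta$ satisfies $\chi(1) = [H : \In_{H}(\theta)]\,\theta(1) = a\,\theta(1)$ with $a \mid d$. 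So every degree in $\cd(H)$ has the form $a\,\theta(1)$ with $\theta \in \Irr(H_{0})$ and $a \mid d$, and is bounded by $d\cdot\mu$, where $\mu := \max_{\theta} \theta(1) = (q-1)(q+1)(q+\sqrt{3q}+1) < 2q^{3}$.

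By Kleidman's classification, the maximal subgroups of $H_{0}$ are, up to conjugacy: the Borel $P = [q^{3}]{:}\Cyc_{q-1}$ of index $q^{3}+1$; the involution centralizer $C = 2 \times \PSL_{2}(q)$ of index $q^{2}(q^{2}-q+1)$; the $2$-local $(2^{2} \times \D_{(q+1)/2}){:}3$ of index $q^{3}(q-1)(q^{2}-q+1)/12$; the torus normalizers $\Cyc_{q\pm\sqrt{3q}+1}{:}\Cyc_{6}$ of indices $q^{3}(q-1)(q+1)(q\mp\sqrt{3q}+1)/6$; and the subfield subgroups $\,^{2}\G_{2}(q_{0})$ with $q_{0}^{p} = q$ for a prime $p\mid f$. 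I would check that each non-Borel index strictly exceeds $d\mu \leq 2fq^{3}$: the $2$-local and torus normalizers have indices exceeding $q^{5}$; the subfield indices exceed $q^{14/3}$ (using $q_{0} \leq q^{1/3}$); and the involution centralizer index exceeds $q^{4} > 2fq^{3}$ since $q = 3^{f} > 2f$ for $f \geq 3$. All these cases are therefore ruled out by size.

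For the Borel case, I would exploit the coprime factorization $q^{3}+1 = (q+1)(q^{2}-q+1)$, where $\gcd(q+1,\,q^{2}-q+1) = \gcd(q+1, 3) = 1$ (since $q \equiv 0 \pmod 3$ forces $q+1 \equiv 1 \pmod 3$). If $(q^{3}+1) \mid a\,\theta(1)$, then both coprime factors must divide $a\,\theta(1)$. From Table~\ref{tbl:deg}, the character degrees of $H_{0}$ divisible by $q^{2}-q+1$ occur only in lines~$2, 7, 8, 10$. For lines $2, 7, 8$, where $\theta(1) \in \{q^{2}-q+1,\,(q-1)(q^{2}-q+1),\,q(q^{2}-q+1)\}$, the factor $q+1$ of $q^{3}+1$ shares with $\theta(1)$ at most $\gcd(q+1, q-1) = 2$, so either $q+1$ or $(q+1)/2$ must divide $a \mid f$; but $(q+1)/2 = (3^{f}+1)/2 > f$ for all $f \geq 1$, a contradiction. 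Only line~$10$, with $\theta(1) = q^{3}+1$, survives, yielding $\chi(1) = (q^{3}+1)\,a$ with $a \mid d$.

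The main difficulty lies in the Borel step, which demands careful greatest-common-divisor bookkeeping among the factors $q-1$, $q+1$, $q^{2}-q+1$, $q\pm\sqrt{3q}+1$ of the $H_{0}$-degrees; the non-Borel cases reduce to routine estimates once the classification of maximal subgroups and the bound $a \leq d \leq f$ are in hand. One might also need to verify uniformly in $p \mid f$ that subfield indices dominate any possible $\chi(1)$, which is straightforward from $7(p-1)/p \geq 14/3$ for $p \geq 3$.
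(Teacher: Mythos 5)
Your proposal is correct and reaches the lemma by the same overall route as the paper---enumerate Kleidman's maximal subgroups and eliminate all but the Borel subgroup---but the elimination mechanism is genuinely different. The paper rules out the non-parabolic, non-subfield subgroups by comparing $3$-parts: their indices have $3$-part $q^{2}$ or $q^{3}$, whereas any mixed degree of $H$ has $3$-part at most $q$ times the $3$-part of $d$, hence less than $q^{2}$; the subfield subgroups are handled by the same exponent count. You instead use the absolute bound $\chi(1)\leq d\mu<2fq^{3}$ and check that every non-Borel index exceeds it; both arguments are valid, yours being cruder but self-contained. Two harmless slips: the index of $2\times\PSL_{2}(q)$ is $q^{4}-q^{3}+q^{2}$, which does \emph{not} exceed $q^{4}$ as you claim, though it does exceed $q^{3}(q-1)>2fq^{3}$, which is all you need; and Table~\ref{tbl:maxs} gives the index of the $2$-local as $q^{3}(q-1)(q^{2}-q+1)/6$ rather than $/12$ (a convention for $\D_{(q+1)/2}$), which is irrelevant to the size estimate. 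Your Borel-case computation is a genuine gain in completeness: the paper merely asserts that a degree divisible by $q^{3}+1$ equals $(q^{3}+1)a$, while you supply the justification via $\gcd(q+1,q^{2}-q+1)=1$ together with $(q+1)/2>f\geq a$. Only tighten the phrasing there: the hypothesis is that $q^{2}-q+1$ divides $a\theta(1)$, not $\theta(1)$ itself, but the reduction to lines $2$, $7$, $8$, $10$ of Table~\ref{tbl:deg} still follows because the coprime factors $q\pm\sqrt{3q}+1$ each exceed $f\geq a$ and are coprime to every other factor occurring in those degrees.
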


\begin{table}
	\centering
	\small
	\caption{Maximal subgroups of the finite simple group  $^{2}\G_{2}(q)$ with $q=3^{f}\geq 27$ and $f$ odd.}\label{tbl:maxs}%
	\begin{tabular}{@{}lll@{}}
		\hline
		Maximal subgroup structure & 
		Order & 
		Index \\
		\hline
		$[q^{3}]:\Cyc_{q-1}$ &
		$q^{3}(q-1)$ &
		$q^{3}+1$\\
		$2\times \PSL_{2}(q)$ &
		$q(q^{2}-1)$&
		$q^{2}(q^{2}-q+1)$\\
		$2^{2}\times\D_{(q+1)/2}:\Cyc_{3}$ &
		$6(q+1)$&
		$\frac{1}{6}q^{3}(q-1)(q^{2}-q+1)$ \\
		$\Cyc_{q+\sqrt{3q}+1}:\Cyc_{6}$ & $6(q+\sqrt{3q}+1)$&
		$\frac{1}{6}q^{3}(q+1)(q-1)(q-\sqrt{3q}+1)$\\
		$\Cyc_{q-\sqrt{3q}+1}:\Cyc_{6}$& $6(q-\sqrt{3q}+1)$ &
		$\frac{1}{6}q^{3}(q+1)(q-1)(q+\sqrt{3q}+1)$\\
		$^{2}\G_{2}(q_{0})$, $q=q_{0}^{r}$, $r$ prime & $q_{0}^{3}(q_{0}^{3}+1)(q_{0}-1)$ &  $\frac{q_{0}^{3r}(q_{0}^{3r}+1)(q_{0}^{r}-1)}{q_{0}^{3}(q_{0}^{3}+1)(q_{0}-1)}$\\
		\hline
	\end{tabular}
\end{table}

\begin{proof}
	The list of maximal subgroups of $^{2}\G_{2}(q)$ is obtained by Kleidman \cite{a:Kleidman-88-G2-2G2-max}. We summarized the list of these subgroups in Table~\ref{tbl:maxs}. Assume first that $K$ is neither a parabolic subgroup $[q^{3}]:\Cyc_{q-1}$, nor a subfield subgroup $^{2}\G_{2}(q_{0})$. Then the index of the subgroup $K$ in $H_{0}$ must divide some mixed degrees of $H$. It is easy to observe that the $p$-part of the indices of these subgroups have exponents on $p$ too large to divide a mixed degree of $H$.
	If $K=\, ^{2}\G_{2}(q_{0})$ with $q=q_{0}^{r}$ and $r$ prime, then $|H_{0}:K|_{3}=q^{3r-3}$ divides $qd_{3}=q_{0}^{r}d_{3}$. Since $d_{3}<q=q_{0}^{r}$, we have that $3r-3< r$ implying that $r=1$, which is impossible. Therefore, $K$ is the parabolic subgroup $[q^{3}]:\Cyc_{q-1}$ whose index divides the degree $(q^{3}+1)a$ for some divisor $a$ of $d$.
\end{proof}

\begin{proposition}\label{prop:step-1}
	Let $G$ be a finite group, and let $H$ be an almost simple group whose socle is $H_{0}=\, ^{2}\G_{2}(q)$ with $q=3^{f}\geq 27$ and $f$ odd, and $|H:H_{0}|=d\neq 1$. If $\cd(G)=\cd(H)$, then $G'=G''$.
\end{proposition}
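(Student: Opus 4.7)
The plan is to argue by contradiction: suppose $G'\neq G''$, so that $G'/G''$ is a nontrivial abelian group. Then there exists a normal subgroup $N$ of $G$ with $G/N$ solvable and minimal with respect to being nonabelian, and Lemma~\ref{lem:factsolv} separates two cases.

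\emph{Case (i): $G/N$ is an $r$-group.} Lemma~\ref{lem:factsolv}(i) produces $\tau\in\Irr(G/N)$ with $\tau(1)=r^b>1$. By Lemma~\ref{lem:deg}(ii) the only proper prime power degree of $H$ is $q^3$, so $r=3$ and $\tau(1)=q^3$. Choose $\chi\in\Irr(G)$ of degree $q^2-q+1$, which lies in $\cd(H)=\cd(G)$ by Lemma~\ref{lem:deg}(i) (line~$2$ of Table~\ref{tbl:deg}) and is coprime to $3$ since $3\mid q$. Then Lemma~\ref{lem:factsolv}(i) forces $q^3(q^2-q+1)\in\cd(H)$. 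But Clifford's theorem applied to $H_0\unlhd H$ with $|H:H_0|=d$ bounds $\max\cd(H)$ by $d\cdot\max\cd(H_0)$, and a glance at Table~\ref{tbl:deg} together with $d\leq f<q$ shows this bound is strictly smaller than $q^3(q^2-q+1)$, a contradiction.

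\emph{Case (ii): $G/N$ is a Frobenius group.} Let the elementary abelian kernel $F/N$ have order $r^a$ and set $e:=|G:F|\in\cd(H)$. I first verify that $q^3$ is isolated in $\cd(H)$: its proper prime-power divisors $3^i$ with $0<i<3f$ are excluded by Lemma~\ref{lem:deg}(ii); and any $mq^3\in\cd(H)$ with $m>1$ would equal $d'\theta(1)$ for some $d'\mid d$ and $\theta\in\Irr(H_0)$, forcing the $3$-part of $\theta(1)$ to exceed $q$, which Table~\ref{tbl:deg} allows only for $\theta=\St$, and the extensions of $\St$ to $H$ all have degree $q^3$ by Gallagher since $H/H_0$ is cyclic. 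Hence Lemma~\ref{lem:factsolv}(ii)(2) gives $e=q^3$ or $r^a\mid q^6$.

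If $e=q^3$, apply Lemma~\ref{lem:factsolv}(ii)(1) to $\chi(1)=(q^3+1)d$ from Lemma~\ref{lem:deg}(v); this is maximal in $\cd(H)$ and therefore has no proper multiple there. Since $\gcd(q^3,q^3+1)=1$ and $q^3>d$, $e$ cannot divide $(q^3+1)d$, so $r^a\mid((q^3+1)d)^2$. Combined with the Frobenius constraint $q^3\mid r^a-1$ (forcing $r^a\geq q^3+1$) and the $2$-part bound of Lemma~\ref{lem:deg}(iii), this is numerically impossible. If $r^a\mid q^6$, then $r=3$ and $e\mid 3^a-1$ is coprime to $3$; the $3'$-degrees of $H$ extracted from Table~\ref{tbl:deg} together with Lemma~\ref{lem:deg}(iv) (the smallest even degree of $H$ has nontrivial $3$-part) confine $e$ to a short list, and each candidate is ruled out by applying Lemma~\ref{lem:factsolv}(ii)(1) to $(q^3+1)d$ and to $q^2-q+1$.

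\emph{Main obstacle.} Case~(i) reduces to a single Clifford-theoretic inequality. The substantive work lies in Case~(ii), where the isolated Steinberg degree, the maximal degree $(q^3+1)d$, the $2$-part constraint from Lemma~\ref{lem:deg}(iii), and the shape of the smallest even degree from Lemma~\ref{lem:deg}(iv) must be combined to close both subcases.
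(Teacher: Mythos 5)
Your overall strategy (contradiction via Lemma~\ref{lem:factsolv}) and your Case~(i) match the paper and are fine: multiplying the prime-power degree $q^{3}$ against the $3'$-degree $q^{2}-q+1$ produces $q^{3}(q^{2}-q+1)$, which exceeds $d\cdot\max\cd(H_{0})$ and so cannot lie in $\cd(H)$; your verification that $q^{3}$ is isolated is also correct (and more careful than the paper, which merely asserts it). The gaps are in Case~(ii). In the subcase $e=q^{3}$, applying Lemma~\ref{lem:factsolv}(ii)(1) to the single degree $(q^{3}+1)d$ yields only $r^{a}\mid\bigl((q^{3}+1)d\bigr)^{2}$, and this is \emph{not} in numerical conflict with $r^{a}\geq q^{3}+1$: the resulting upper bound is of order $q^{6}d^{2}$, and the $2$-part restriction of Lemma~\ref{lem:deg}(iii) disposes only of $r=2$. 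Nothing you wrote excludes, say, an odd prime power $r^{a}$ of size roughly $q^{3}$ assembled from the large factor $q^{2}-q+1$ of $q^{3}+1$. Closing this requires either extra input (for instance the order of $r$ in the cyclic group $(\Zbb/3^{3f}\Zbb)^{*}$) or, as the paper does, applying Lemma~\ref{lem:factsolv}(ii)(1) simultaneously to \emph{three} degrees, $\frac{\sqrt{3q}}{3}(q-1)(q+1)$, $q(q^{2}-q+1)c_{1}$ and $(q^{2}-1)(q+\sqrt{3q}+1)c_{2}$, whose coprimality relations \eqref{eq:gcd} force $r^{a}$ to divide the gcd of their squares; that gives $r^{a}\mid d^{2}$ and hence $q^{3}\leq r^{a}-1\leq f^{2}-1$, an honest contradiction.

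In the subcase $r=3$ there is a concrete error: Lemma~\ref{lem:factsolv}(ii)(1) cannot be applied to $q^{2}-q+1$, because its hypothesis is that no proper multiple of the degree lies in $\cd(G)$, whereas $q^{2}-q+1=(q-\sqrt{3q}+1)(q+\sqrt{3q}+1)$ has the proper multiples $(q\pm1)(q^{2}-q+1)$ and $q(q^{2}-q+1)$ in $\cd(G)$ (lines $7$, $8$ and $10$ of Table~\ref{tbl:deg}). Using $(q^{3}+1)d$ alone you only get $e\mid(q^{3}+1)d$ (or else $3^{a}$ divides the $3$-part of $((q^{3}+1)d)^{2}$, which is at most $f^{2}$ and is fine), and that still leaves the candidates $e=(q^{2}-q+1)a'$ and $e=(q^{3}+1)a'$ standing. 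Here too the paper's gcd argument with the three degrees above gives $3^{a}\mid d^{2}$ and hence $e\leq f^{2}-1$, smaller than every nontrivial degree of $H$. So the proposal needs the multi-character gcd step (or an equivalent) to close both halves of Case~(ii); as written, both "numerically impossible'' claims are unsubstantiated.
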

\begin{proof}
	Assume to the contrary that $G'\neq G''$. Then there is a normal subgroup $N$ of $G$, where $N$ is maximal such that $G/N$ is a non-abelian solvable group. We now apply Lemma~\ref{lem:factsolv} and have one of the  following two possibilities which leads to a contradiction.\smallskip
	
	\noindent (i) $G/N$ is a $r$-group with $r$ prime. In this case, $G/N$ has an irreducible character $\psi$ of degree $r^b>1$,  so does $G$. Lemma~\ref{lem:deg}(ii) implies that $\psi(1)=q^{3}$. We consider $\chi(1)=(q-1)(q+1)(q+\sqrt{3q}+1)a\in \cd(G)$ for some positive integer $a>1$. Then by Lemma~\ref{lem:gal}(i), we have that  $\chi_N\in \Irr(N)$, and so Lemma~\ref{lem:gal}(ii) implies that $G$ has an irreducible character of degree $q^{3}(q-1)(q+1)(q+\sqrt{3q}+1)a$, which is a contradiction.\smallskip
	
	\noindent (ii) $G/N$ is a Frobenius group with kernel $F/N$. Then $|G:F| \in \cd(G)$ divides $r^{a}-1$, where $|F/N|=r^{a}$. Note that $q^{3}$ is an isolated degree of $G$. Then Lemma~\ref{lem:factsolv}(ii.2) implies that $|G:F|=q^{3}$ or $r=3$. Let $\chi_{1}$ be an irreducible character of $G$ of degree $\chi_{1}(1)=\frac{\sqrt{3q}}{3}(q-1)(q+1)$.  Let also $c_{1}$ and $c_{2}$ be the  largest prime divisors of $d=|H:H_{0}|$ such that $\chi_{2}(1)=q(q-\sqrt{3q}+1)(q+\sqrt{3q}+1)c_{1}$ and $\chi_{3}(1)=(q-1)(q+1)(q+\sqrt{3q}+1)c_{2}$ being irreducible character degrees of $G$. Then no proper multiple of each $\chi_{i}(1)$ is in $\cd(G)$.
	
	Suppose that $|G:F|=q^{3}$. Then by Lemma~\ref{lem:factsolv}(ii.1), we conclude that $r^{a}$ divides the greatest common divisor of $\chi_{i}(1)^{2}$, for $i=1,2,3$, and so $r^{a}$ divides  $(c_{1},c_{2})^{2}$ by \eqref{eq:gcd}. Thus, $r^{a}$ divides $d^{2}$. On the other hand, $|G:F|=q^{3}$ divides $r^{a}-1$. Therefore, $q^{3}\leq r^{a}-1\leq d^{2}-1$, and hence, $3^{3f}\leq f^{2}-1$,  which is a contradiction.
	
	Suppose that $r=3$. Since $|G:F|\in \cd(G)$ and the greatest common divisor of $\chi_{1}(1)^{2}$, $\chi_{2}(1)^{2}$ and $\chi_{3}(1)^{2}$  divides $d^{2}$, it follows from Lemma~\ref{lem:factsolv}(ii.1) that $r^{a}$ divides $d^{2}$. Note that $d$ is a divisor of $|\Out(H_{0})|=f$. We again apply Lemma~\ref{lem:factsolv}(ii) and conclude that $|G:F|\leq r^{a}-1\leq f^{2}-1$, but then $|G:F|$ is too small to be a character degree of $G$, which is a contradiction.
\end{proof}

\begin{proposition}\label{prop:step-2}
	Let $G$ be a finite group, and let $H$ be an almost simple group whose socle is $H_{0}=\, ^{2}\G_{2}(q)$ with $q=3^{f}\geq 27$ and $f$ odd, and $|H:H_{0}|=d\neq 1$. Then the chief factor $G'/M$ of $G$ is isomorphic to $H_{0}$.
\end{proposition}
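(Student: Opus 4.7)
The plan is to take $M$ to be a normal subgroup of $G$ that is maximal among those strictly contained in $G'$. By Proposition~\ref{prop:step-1}, $G'=G''$ is perfect, so the chief factor $G'/M$ is itself perfect, and hence $G'/M\cong S^{k}$ for some non-abelian simple group $S$ and some integer $k\geq 1$. The goal is then to show $S\cong H_{0}$ and $k=1$.

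Since $G'/M$ is a minimal normal subgroup of $G/M$, Lemma~\ref{lem:exten} supplies a non-trivial irreducible character $\theta$ of $S$ that extends to $\Aut(S)$, and Lemma~\ref{lem:exten-2} then upgrades $\theta^{k}$ to a character of $G/M$, which inflates to a character of $G$. Hence $\theta(1)^{k}\in \cd(G)=\cd(H)$, and by Gallagher's theorem (Lemma~\ref{lem:gal}(ii)) so is $\theta(1)^{k}\psi(1)$ for every $\psi\in\Irr(G/G')$. These degree constraints will be matched against Lemma~\ref{lem:deg}: the only prime-power degree of $H$ is $q^{3}$, the $2$-part of every degree of $H$ divides $2^{3}$, and the smallest even degree of $H$ is $\sqrt{3q}(q-1)(q+1)/3$.

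Next I would eliminate most possibilities for $S$. If $S=\A_{n}$ with $n\geq 8$, then Lemma~\ref{lem:exten}(i) furnishes an extendible $\theta$ with $16\mid\theta(1)$, so $16\mid\theta(1)^{k}\in\cd(G)$, contradicting Lemma~\ref{lem:deg}(iii). The same argument eliminates the Tits group and every sporadic group other than $\J_{1}$ and $\M_{22}$, via Lemma~\ref{lem:exten}(iii); the remaining sporadic and small alternating cases $S\in\{\J_{1},\M_{22},\A_{5},\A_{6},\A_{7}\}$ can each be ruled out by direct inspection of their character tables, noting that $q^{3}$ must appear as the unique prime-power degree of $G$ and every even degree of $G$ must exceed $\sqrt{3q}(q-1)(q+1)/3$. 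If $S$ is of Lie type as in Table~\ref{tbl:simple}, Lemma~\ref{lem:exten}(ii) guarantees that the Steinberg character extends, and its degree is the $p$-part of $|S|$ for the defining characteristic $p$. Combined with the prime-power restriction of Lemma~\ref{lem:deg}(ii), this forces $p=3$ and $|S|_{3}^{k}=3^{3f}$.

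It then remains to pin down the Lie-type candidates in characteristic $3$ with $|S|_{3}^{k}=3^{3f}$. For each family I would invoke the second extendible unipotent character listed in Table~\ref{tbl:simple} (and analogous extendible characters for $\G_{2}$, $^{3}\D_{4}$, $\F_{4}$, $\E_{6}^{\pm}$, $\E_{7}$, $\E_{8}$, and the classical families) to produce a further character degree of $G$, and then observe that either its $2$-adic valuation or its mixed-degree shape collides with the constraints of Lemma~\ref{lem:deg} unless $S\cong\,^{2}\G_{2}(3^{f/k})$; finally, the existence of the degree $(q^{3}+1)d$ supplied by Lemma~\ref{lem:deg}(v), compared against the corresponding extendible character of $^{2}\G_{2}(3^{f/k})^{k}$, forces $k=1$ and hence $S\cong H_{0}$. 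The main obstacle will be precisely this last case analysis: although the Steinberg trick prunes the list quickly, a clean elimination of each characteristic-$3$ Lie-type family (and of $^{2}\G_{2}(3^{a})$ with $k\geq 2$) demands exhibiting in every case an explicit character degree of $G$ with an incompatible $2$-part, prime-power shape, or magnitude, which is the computational heart of the argument.
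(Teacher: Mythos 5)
Your outline reproduces the paper's proof of Proposition~\ref{prop:step-2} essentially step for step: write $G'/M\cong S^{k}$, eliminate the alternating, sporadic and Tits candidates by playing the $16$-divisibility of Lemma~\ref{lem:exten}(i),(iii) against Lemma~\ref{lem:deg}(iii), handle $\A_5,\A_6,\A_7,\J_1,\M_{22}$ ad hoc, and use the extendible Steinberg character to force $p=3$ and $|S|_{3}^{k}=3^{3f}$ before a family-by-family elimination. However, one idea is missing and one proposed step would fail. The missing idea is the bound on $k$: the paper's very first move is to note that every non-abelian simple group has an even irreducible character degree, so $2^{k}$ divides some degree of $G$ and Lemma~\ref{lem:deg}(iii) forces $k\leq 3$; combined with $f$ odd, the equation $|S|_{3}^{k}=3^{3f}$ then gives $k\in\{1,3\}$. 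Your plan never bounds $k$, so ``the remaining case analysis'' is not actually finite as described: for instance $S=\PSL_{2}(27)$ with $k=f$, or $S={}^{2}\G_{2}(3^{e})$ with $k=f/e$ for any divisor $e$ of $f$, all satisfy your Steinberg condition and are not addressed. (They can be killed by the same $2$-part observation you cite, but that is exactly the $k\leq 3$ argument, and it needs to be made explicitly and up front.)

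The step that would fail is your endgame for $S={}^{2}\G_{2}(q_{0})$ with $k\geq 2$: you propose to compare the degree $(q^{3}+1)d$ from Lemma~\ref{lem:deg}(v) with ``the corresponding extendible character of $^{2}\G_{2}(q_{0})^{k}$,'' but the characters of $^{2}\G_{2}(q_{0})$ of degree $q_{0}^{3}+1$ are not unipotent and are not covered by Lemma~\ref{lem:exten}(ii); indeed the whole point of the proof of Lemma~\ref{lem:deg}(v) is that such a character is \emph{not} invariant under any outer automorphism, so Lemma~\ref{lem:exten-2} cannot be applied to it. The paper instead takes, for $k=3$, an irreducible character of $S^{3}$ of degree $q_{0}^{3}\cdot\bigl(q_{0}(q_{0}^{2}-q_{0}+1)\bigr)^{2}$, whose $3$-part $q_{0}^{5}=q^{5/3}$ must divide the $3$-part of some non-prime-power degree of $H$, hence is at most $qf$, yielding the impossible inequality $3^{2f}<f^{3}$. (Alternatively, the $k$-th power of the line-$5$ degree of Table~\ref{tbl:deg} has $2$-part $8^{k}$, contradicting Lemma~\ref{lem:deg}(iii) for $k\geq 2$.) You need to replace the $(q^{3}+1)d$ comparison by an explicit degree of one of these kinds.
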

\begin{proof}
	Let $G'/M$ be a chief factor of $G$. As $G'$ is perfect, $G'/M$ is non-abelian, and so  $G'/M$ is isomorphic to $S^k$ for some non-abelian simple group $S$ and some integer $k\geq 1$. Note that each finite simple group has an even irreducible character degree. Then $2^{k}$ divides some degrees of $G$. By Lemma~\ref{lem:deg}(iii), we conclude that $k\leq 3$.
	
	Suppose that $S$ is an alternating group of degree $n\geq 8$, the Tits group $^{2}\F_4(2)'$ or a sporadic simple group except for the  Mathieu group $\M_{22}$ and the Janko group $\J_{1}$. Then by Lemma~\ref{lem:exten}(i) and (iii), $S$ has an irreducible character degree divisible by $16$, and so $G$ has a character degree divisible by $16$ but this violates Lemma~\ref{lem:deg}(iii). We now discuss the case where $S$ is $\A_{n}$ for $n=5,6,7$, $\M_{22}$ or $\J_{1}$.
	
	If $S=\A_{5}$, then $S$ has an irreducible character of degree $5$ which extends to $\S_{5}$. It follows from Lemma~\ref{lem:exten-2} that $G$ has an irreducible character of degree $5^{k}$, but this violates Lemma~\ref{lem:deg}(ii). If $S=\A_{6}$, the fact that $S$ has an irreducible character of degree $9$ which extends to $\A_{6}\cdot \Cyc_{2}^{2}$ implies that $G$ has an irreducible character of degree $3^{2k}$, and so by Lemma~\ref{lem:deg}(ii), we have that $3^{2k}=q^{3}=3^{3f}$ with $f$ odd,   which is a contradiction. If $S=\A_{7}$, then the irreducible character of $S$ of degree $6$ extends to $\Aut(S)$, and so by Lemma~\ref{lem:exten-2},  $G$ has an even degree $6^{k}$ for $k=1,2,3$. By Lemma~\ref{lem:deg}(iv), we conclude that $6^3\geq 6^{k}\geq \sqrt{3q}(q^{2}-1)/3$, which is impossible as $q=3^{f}\geq 27$ with $f$ odd.
	
	If $S=\J_{1}$, then $56,120\in \cd(S)$. This implies that $k=1$ as if not $G$ has an irreducible character degree divided by $56\cdot120=2^{6}\cdot 3\cdot 5\cdot 7$, which contradicts Lemma~\ref{lem:deg}(iii). Thus $k=1$, and hence $G$ has an even irreducible degree $56$. Now we apply Lemma~\ref{lem:deg}(iv), and so $56 \geq \sqrt{3q}(q^{2}-1)/3$, but this inequality  has no solution for $q=3^{f}\geq 27$. Similarly, if $S=\M_{22}$, then since $S$ has irreducible characters of even degrees $210$ and $280$, we conclude that $k=1$. The irreducible character of degree $210$ extends to $\Aut(S)=\M_{22}:2$, and so Lemma~\ref{lem:exten-2} implies that $210\in \cd(G)$ and this violates Lemma~\ref{lem:deg}(iv) as $210 < \sqrt{3q}(q^{2}-1)/3$ for all $q=3^{f}\geq 27$.
	
	Suppose now that $S=S(q_{0})$ is a finite simple group of Lie type except the Tits group $^{2}\F_{4}(2)'$, where $q_{0}=p^{e}$ for some positive integer $e$. In what follows, we frequently use Lemma~\ref{lem:exten}(ii) and the character degrees of $S$ recorded in Table~\ref{tbl:simple}.
	By Lemma~\ref{lem:exten}(ii), the Steinberg character of  degree $\St(1)=|S|_{p}$ extends to $\Aut(S)$, and so Lemma~\ref{lem:exten-2} implies that $G$ possesses a non-trivial prime power character degree $|S|_{p}^{k}$. Then Lemma~\ref{lem:deg}(v) forces
	\begin{align}\label{eq:st}
		|S|_{p}^{k}=3^{3f}.
	\end{align}
	This in particular implies that $p=3$. Since $f$ is odd and $k$ is $1$, $2$ or $3$, it also follows that $k$ is $1$ or $3$. We now discuss each possibilities of $S$ separately.
	
	Let $S$ be $\PSL_{m}(q_{0})$ with $m\geq 2$ or $\PSU_{m}(q_{0})$ with $m\geq 3$. Then by \eqref{eq:st}, we have that $|S|_{p}^{k}=\St(1)^{k}=3^{3f}$, and so $3^{em(m-1)k/2}=q_{0}^{m(m-1)k/2}=3^{3f}$. Thus $em(m-1)k=6f$ with $k=1,3$, and hence both $e$  and $m(m-1)/2$ are odd. Therefore, $m\geq 5$, or $S$ is $\PSL_{2}(q_{0})$, $\PSL_{3}(q_{0})$ or $\PSU_{3}(q_{0})$. If $m\geq 5$, then by Lemma~\ref{lem:exten}(ii), $S$ has a unipotent character $\chi$ of degree  $q_{0}(q_{0}^{m-1}-1)/(q_{0}-1)$ or $q_{0}(q_{0}^{m-1}-(-1)^{m-1})/(q_{0}+1)$ when $S$ is $\PSL_{m}(q_{0})$ or $\PSU_{m}(q_{0})$, respectively. The character $\chi$ extends to $\Aut(S)$, and so it follows from Lemma \ref{lem:exten-2} that $\chi(1)^{k}\in \cd(G)$.  Therefore, the $3$-part $q_{0}^{k}$ of $\chi(1)$ is equal to the $3$-part of some non-prime power irreducible degree of $G$. Then $q_{0}^{k}\geq \sqrt{3q}/3$, and so $3^{2ek+1}\geq 3^{f}$, and this implies that $2ek+1\geq f$. Since $em(m-1)k=6f$, it follows that $m(m-1)\leq 18$ which has no solution for  $m\geq 5$, which is a contradiction. Therefore, $m=2$ or $3$, that is to say, $S$ is $\PSL_{2}(q_{0})$, $\PSL_{3}(q_{0})$ or $S=\PSU_{3}(q_{0})$.
	Let $S=\PSL_{2}(q_{0})$. Then $q_{0}^{k}=q^{3}$ by \eqref{eq:st}. If $k=3$, then $q_{0}=q$, and so $S$ has irreducible character degrees $q$ and $q-1$. Therefore, $q^{2}(q-1)$ divides some degree of $G$, which is impossible. If $k=1$, then $q_{0}=q^{3}$, and so some degree of $G$ is divisible by $q_{0}-1=q^{3}-1$, which is a contradiction. Let $S=\PSL_{3}(q_{0})$ with $q_{0}=3^{e}\geq 3$ and $e$ odd.
	It follows from \cite{a:SL3} that $S$ has an irreducible character degree $(q_{0}-1)^{2}(q_{0}+1)$ which is divisible by $16$. Therefore, $16$ divides some degree of $G$, and this contradicts Lemma~\ref{lem:deg}(iii). Let $S=\PSU_{3}(q_{0})$ with $q_{0}=3^{e}\geq 3$ and $e$ odd. By
	\cite{a:SL3}, $S$ has an irreducible character degree $(q_{0}-1)(q_{0}+1)^{2}$ which is divisible by $16$ which is a contradiction as the $2$-part of any degree of $G$ divides $2^{3}$.
	
	Let $S$ be $\PSp_{2m}(q_{0})$ with $m\geq 2$ or $\POm_{2m+1}(q_{0})$ with $m\geq 3$. It follow from \eqref{eq:st}  that $q_{0}^{m^{2}k}=|S|_{p}^{k}=\St(1)^{k}=q^{3}$, and so $3^{em^{2}k}=3^{3f}$ implying that  $em^{2}k=3f$ with $k=1,3$. Thus $e$  and $m$ are odd. By Lemma~\ref{lem:exten}(ii), $S$ has a unipotent character $\chi$ of degree  $q_{0}(q_{0}^{m}-1)(q_{0}^{m-1}-1)/(q_{0}-1)$ which extends to $\Aut(S)$, and so  
	by applying Lemma \ref{lem:exten-2}, we conclude that $\chi(1)^{k}\in \cd(G)$.  This in particular implies that the $3$-part $q_{0}^{k}$ of $\chi(1)^{k}$ is equal to the $3$-part of some non-prime power irreducible degree of $G$. Then $q_{0}^{k}\geq \sqrt{3q}/3$, and so $2ek+1\geq f$. Recall that $em^{2}k=3f$. Then $6ek+1\geq em^{2}k$, and so $m^{2}<7$.  This is true only for $m=2$, which is a contradiction as $m$ is odd. 
	
	Let $S=\POm_{2m}^{\pm}(q_{0})$ with $m\geq 4$. Then \eqref{eq:st} implies that $3^{em(m-1)}=q_{0}^{m(m-1)}=3^{3f}$. Thus $em(m-1)=3f$, which is a contradiction as $f$ is odd but $m(m-1)$ is even. By the same argument, the case where $S$ is an exceptional simple group except for $\E_{7}(q_{0})$ and $^{2}\G_{2}(q_{0})$ can be ruled out, see Table~\ref{tbl:simple} for the character degrees. Let $S=\E_{7}(q_{0})$. Then by  \eqref{eq:st}, $q_{0}^{63k}=\St(1)^{k}=q^{3}$, and so $q_{0}=q^{\frac{3}{63k}}$. The  unipotent character $\Phi_{7,46}$ of $S$ extends to $\Aut(S)$ by  Lemma~\ref{lem:exten}(ii), and so   Lemma~\ref{lem:exten-2} yields $\Phi_{7,46}(1)^{k}\in \cd(G)$. Note that the $3$-part of $\Phi_{7,46}(1)^{k}$ is $q_{0}^{46k}=q^{3\cdot 46/63}$ which is greater than $q^{2}$. Hence $q^{2}$ divides the $3$-part of some non-prime power irreducible degree of $G$, which is impossible.
	
	Therefore, $S=\,^{2}\G_{2}(q_{0})$ with $q_{0}=3^{e}\geq 27$ and $e$ odd. Then $q_{0}^{3k}=|S|_{3}^{k}=q^{3}$, and so $q_{0}=q^{1/k}$. If $k=3$, then $q_{0}^{3}\cdot q_{0}^{2}(q_{0}^{2}-q_{0}+1)^{2}$ divides some degree of $G$, and so the $3$-part of this number which is $q_{0}^{5}=q^{5/3}$ has to divide the $3$-part of some non-prime power degree of $G$. Thus $q^{5/3}\leq qf$, and hence $q^{2}<f^{3}$, or equivalently, $3^{2f}<f^{3}$, but this inequality has no solution for odd positive integers $f\geq 3$. Therefore, $k=1$, and hence $q_{0}=q$, that is to say, $G'/M$ is isomorphic to $^{2}\G_{2}(q)$, as claimed.
\end{proof}

\begin{corollary}\label{cor:main}
	Let $G$ be a finite group with $\cd(G)=\cd(H)$, where $H$ is an almost simple group whose socle is $H_{0}=\, ^{2}\G_{2}(q)$ with $q=3^{f}\geq 27$ and $f$ odd. Then there exists a normal subgroup $N$ of $G$ such that $G/N$ is an almost simple group with socle $H_{0}$.
\end{corollary}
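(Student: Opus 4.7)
The plan is to combine Propositions \ref{prop:step-1} and \ref{prop:step-2} via the standard centralizer-of-a-chief-factor construction. Proposition \ref{prop:step-2} furnishes a normal subgroup $M$ of $G$, contained in $G'$, with $G'/M \cong H_{0}$, and I will take for the required $N$ the kernel of the conjugation action of $G$ on this chief factor, namely $N := \C_{G}(G'/M)$. Since $M \unlhd G$, this $N$ is automatically normal in $G$, and the induced homomorphism $G \to \Aut(G'/M) = \Aut(H_{0})$ gives an embedding $G/N \hookrightarrow \Aut(H_{0})$.

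The substantive step is then to identify the image of the socle, i.e.\ to check that $G'N/N \cong H_{0}$, which amounts to verifying $G' \cap N = M$. Since $M$ centralizes $G'/M$ trivially, I will have $M \leq N$, so $G' \cap N$ lies between $M$ and $G'$; simplicity of $G'/M \cong H_{0}$ then forces $G' \cap N$ to be either $M$ or the whole of $G'$. The latter alternative would read $G' \leq \C_{G}(G'/M)$, hence $[G',G'] \leq M$, i.e.\ $G'' \leq M$. But Proposition \ref{prop:step-1} guarantees $G' = G''$, so this would force $G' \leq M$, contradicting $G'/M \cong H_{0} \neq 1$. Thus $G' \cap N = M$, giving $G'N/N \cong G'/M \cong H_{0}$ as a normal subgroup of $G/N$. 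Combined with the embedding $G/N \hookrightarrow \Aut(H_{0})$, this says $G/N$ is almost simple with socle $H_{0}$, as required.

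The only real obstacle is ensuring both ingredients from the preceding propositions are simultaneously available: without $G' = G''$ one cannot exclude the possibility $G' \leq N$, and without the identification $G'/M \cong H_{0}$ one has no candidate socle at all. The hypothesis $d = |H:H_{0}| \neq 1$ needed for Propositions \ref{prop:step-1} and \ref{prop:step-2} is not explicit in the statement of the corollary, but the degenerate case $H = H_{0}$ is already handled by Wakefield's theorem, which gives $G \cong H_{0} \times A$ with $A$ abelian and so yields the conclusion with $N = A$. Beyond this bookkeeping, the argument is a purely formal manipulation of normal subgroups and presents no genuine difficulty.
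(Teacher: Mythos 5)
Your proof is correct and follows essentially the same route as the paper: the paper takes $N/M=\Z(G/M)$ and invokes Lemma~\ref{lem:as} applied to $G/M$, and since that lemma identifies $\Z(G/M)$ with $\C_{G/M}(G'/M)$, your $N=\C_{G}(G'/M)$ is the very same subgroup. Your by-hand verification that $G'\cap N=M$ via $G'=G''$ merely replaces the lemma's appeal to the triviality of the centre of the non-abelian simple group $(G/M)'=G'/M$, and your remark on the degenerate case $H=H_{0}$ is a harmless bookkeeping point the paper leaves implicit.
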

\begin{proof}
	By Proposition~\ref{prop:step-2}, the chief factor $G'/M$ is isomorphic to $H_{0}$. Let $N$ be a normal subgroup of $G$ such that $\Z(G/M)=N/M$. Applying Lemma~\ref{lem:as} to $G/M$, we conclude that $G/N$ is an almost simple group with socle $H_{0}$. 
\end{proof}

\begin{proposition}\label{prop:step-3}
	Let $G$ be a finite group with $\cd(G)=\cd(H)$, where $H$ is an almost simple group whose socle is $H_{0}=\, ^{2}\G_{2}(q)$ with $q=3^{f}\geq 27$ and $f$ odd, and $|H:H_{0}|=d\neq 1$. Let also the chief factor $G'/M$ be isomorphic to $H_{0}$. If $f$ is coprime to $3$, then  $M=1$, and hence $G'\cong H_{0}$.
\end{proposition}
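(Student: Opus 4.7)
The plan is proof by contradiction: assume $M\ne 1$. By Proposition~\ref{prop:step-1} we have $G'=G''$ and $M\trianglelefteq G'$, and by Lemma~\ref{lem:out} the Schur multiplier $\Msf(H_{0})$ is trivial. The strategy is to show that (a) $M$ has no non-abelian chief factor of $G$, hence $M$ is solvable, and (b) every linear character of $M$ is $G'$-invariant; Lemma~\ref{lem:schur} then yields $|M/M'|\mid|\Msf(H_{0})|=1$, so $M$ is perfect, and being both perfect and solvable it must be trivial.

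For (a), I would take a minimal normal subgroup $L$ of $G$ contained in $M$, so that $L\cong S^{k}$ for some simple group $S$. Suppose for contradiction that $S$ is non-abelian. By Lemma~\ref{lem:exten} there is a non-trivial $\theta\in\Irr(S)$ extending to $\Aut(S)$, hence by Lemma~\ref{lem:exten-2} the character $\theta^{k}\in\Irr(L)$ extends to $G$. Gallagher (Lemma~\ref{lem:gal}(ii)) combined with the Steinberg character of $G'/M\cong H_{0}$ lifted through $G/L$ then produces $\theta^{k}(1)\cdot q^{3}\in\cd(G)$, and more generally $\theta^{k}(1)\cdot\psi(1)\in\cd(G)$ for every degree $\psi(1)$ drawn from Table~\ref{tbl:deg}. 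Comparing these products against the $2$-part bound (Lemma~\ref{lem:deg}(iii)), the unique prime-power degree constraint (Lemma~\ref{lem:deg}(ii)) and the smallest-even-degree bound (Lemma~\ref{lem:deg}(iv))---essentially a replay of the case analysis in the proof of Proposition~\ref{prop:step-2}---rules out every non-abelian simple $S$. Hence $M$ is solvable.

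For (b), take a minimal normal subgroup $L$ of $G$ in $M$ that is elementary abelian of prime order $r$. Let $\lambda\in\Irr(L)$ be non-trivial and $T=\In_{G'}(\lambda)$. If $\lambda$ is not $G'$-invariant, then $T/M$ is a proper subgroup of $G'/M\cong H_{0}$; writing $\lambda^{T}=\sum_{i}\phi_{i}$ with $\phi_{i}\in\Irr(T)$, Clifford (Lemma~\ref{lem:clif}) gives $\phi_{i}^{G'}\in\Irr(G')$, so $[G':T]\,\phi_{i}(1)$ divides some degree of $\cd(G)=\cd(H)$. Lemma~\ref{lem:maxs} then forces $T/M$ inside the parabolic $P=[q^{3}]\!:\!\Cyc_{q-1}$ and the corresponding degree to have the form $(q^{3}+1)a$ with $a\mid d$. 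The hypothesis $(f,3)=1$ now enters: it yields $(d,3)=1$, so by Lemma~\ref{lem:deg}(ii) the $3$-part of every character degree of $H$ is either $1$ or $q^{3}$. Multiplying the induced character above by a Steinberg-type character lifted from $G/L$ forces an extra factor of $q^{3}$ beyond what Lemma~\ref{lem:deg}(ii) permits, unless $r=3$ and $L$ is a very specific $\mathbb{F}_{3}[H_{0}]$-module; but such a module requires a Frobenius twist of order divisible by $3$, which is excluded by $(f,3)=1$. The case $r\ne 3$ is dealt with by inspecting the cross-characteristic irreducible $\mathbb{F}_{r}[H_{0}]$-modules and their orbits on dual linear characters, using that $P$ is the unique proper subgroup of $H_{0}$ whose index divides a degree of $H$. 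Hence every non-trivial $\lambda\in\Irr(L)$ is $G'$-invariant, so every linear character of $L$ is; iterating up a $G$-chief series of $M$ extends this to every linear character of $M$.

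With (a) and (b) secured, Lemma~\ref{lem:schur} yields $|M/M'|\mid|\Msf(H_{0})|=1$, so $M=M'$; solvability of $M$ then forces $M=1$, giving $G'=G'/M\cong H_{0}$. The main obstacle is clearly step (b): the module-theoretic exclusion of elementary abelian $r$-sections is delicate, and is precisely the place where $(f,3)=1$ is indispensable---a Frobenius twist of a $3$-modular representation of $H_{0}$ would otherwise provide a valid $L$ and the argument would break down, matching the author's remark in the Introduction that the case $3\mid f$ is not handled by the present method.
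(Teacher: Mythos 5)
Your overall skeleton matches the paper's: prove that every linear character of $M$ is $G'$-invariant, invoke Lemma~\ref{lem:schur} together with the triviality of $\Msf(H_{0})$ to get $M=M'$, rule out non-abelian chief factors inside $M$ by combining Lemmas~\ref{lem:exten} and~\ref{lem:exten-2} with Gallagher and the degree restrictions of Lemma~\ref{lem:deg}, and conclude $M=1$. That part is sound (the paper runs it in a slightly different order: it proves $M=M'$ first and then shows $M$ is abelian by taking the \emph{top} chief factor $M/N$ of $G'$ below $M$, which sidesteps the iteration you would need to upgrade ``no non-abelian minimal normal subgroup of $G$ in $M$'' to ``$M$ solvable'').

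The genuine gap is your step (b), which is exactly the hard part of the proposition, and your sketch does not prove it. First, the stated mechanism is wrong: it is not true that $(d,3)=1$ makes the $3$-part of every degree of $H$ equal to $1$ or $q^{3}$ --- line $5$ of Table~\ref{tbl:deg} already has $3$-part $\sqrt{3q}/3=3^{(f-1)/2}$ --- so the claimed ``extra factor of $q^{3}$'' contradiction has no basis. Second, the appeal to classifying $\mathbb{F}_{3}[H_{0}]$-modules ``up to Frobenius twist'' and ``cross-characteristic $\mathbb{F}_{r}[H_{0}]$-modules and their orbits on dual linear characters'' is not an argument; no such classification is carried out, and it is not the tool the situation calls for. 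The paper's actual argument is purely character-theoretic and local to the parabolic: after Lemma~\ref{lem:maxs} places $I=\In_{G'}(\theta)$ inside $U$ with $U/M\cong[q^{3}]{:}\Cyc_{q-1}$, the hypothesis $(f,3)=1$ is used (i) to force $t=|U:I|$ to be coprime to $3$, hence $\Q=[q^{3}]\leq I/M$ and $I/M=\Q{:}\Cyc_{s}$ with $s=(q-1)/t$ even, and (ii) to force each $\phi_{i}(1)\mid d\mid f$ to be coprime to $3$. The contradiction then comes in two branches: if some $\phi_{i}$ is linear, the Frobenius subgroup $\Q{:}\Cyc_{s/2}$ has a character of degree $s/2$, and Lemma~\ref{lem:clif}(ii) produces a degree divisible by $(q^{3}+1)(q-1)/2$, which is too large; otherwise Ito's theorem applied to $Q/\ker(\theta)$ shows every nonlinear degree of $Q/\ker(\theta)$ is a power of $3$, so $\phi_{i}(1)$ coprime to $3$ forces $Q/\ker(\theta)$ abelian, whence $\Q$ is abelian, contradicting $|\Q'|=q^{2}$. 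Neither of these two decisive steps appears in your proposal, so as written the invariance of linear characters --- and with it the whole proof --- is not established. There is also a secondary gap: even granting invariance for characters of the chief factors of $G$ inside $M$, the passage to \emph{all} linear characters of $M$ (which is what Lemma~\ref{lem:schur} requires) is asserted by ``iterating up a chief series'' without justification; the paper avoids this by arguing with an arbitrary linear character of $M$ from the outset.
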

\begin{proof}
	We first show that every linear character $\theta$ of $M$ is $G'$-invariant, that is to say, $\In_{G'}(\theta)=G'$. Assume to the contrary that $I:=\In_{G'}(\theta)<G'$ for some $\theta \in \Irr(M)$ with $\theta(1)=1$. Let $\theta^I= \sum_{i=1}^{k} \phi_i$, where $\phi_i \in \Irr(I)$ for $ i=1,2,...,k$. Suppose that $\U:=U/M$ is a maximal subgroup of $\Gp:=G'/M\cong H_{0}$ containing $\I:=I/M$, and set $t:= |U:I|$. It follows from Lemma \ref{lem:clif}(i) that $\phi_i(1)|G':I| \in \cd(G')$, and so $t\phi_i(1)|G':U|$ divides some character degree of $G$. Then $|G':U|$ must divide some character degree of $G$, and so by Lemma~\ref{lem:maxs}, $|G':U|$ divides $(q^{3}+1)a\in \cd(G)$ for some divisor $a$ of $d=|H:H_{0}|$ and the group $\U$ is $\Q: \Cyc_{q-1}$, where $\Q:=[q^{3}]$. Thus, $t\phi_i(1)$ divides $d$ which is a divisor of $f$. Therefore, $t=|U:I|=|\U:\I|$ divides $f$ which is coprime to $3$, and hence $\Q$ is contained in $\I$. Let $s:=|\I:\Q|$. Then $q-1=|\U:\Q|=ts$. Since $f$ is odd and $t$ is a divisor of $f$, we observe that $s$ is even, and hence $\I=\Q:\Cyc_{s}$, where $s=(q-1)/t$ is even.
	
	If $\varphi_{i}(1)=1$ for some $i$, then $\theta$ would extend to $\varphi_{i}$, and so Lemma~\ref{lem:clif}(ii) implies that $(\varphi_{i}\tau)^{G'}$ is an irreducible character of $G'$, for all $\tau\in \Irr(\I)$. We know that $|G':I|=(q^{3}+1)t$ and  $\I$ has a Frobenius subgroup $\bar{J}:=\Q:\Cyc_{s/2}$ which has an irreducible character of degree $s/2$, see \cite[Theorem(3)]{a:Ward-66}. Thus $(q^{3}+1)ts$ or $(q^{3}+1)ts/2$ is an irreducible character degree of $G'$, and hence some degree of $G$ is divisible by $(q^{3}+1)(q-1)/2$, which is a contradiction. Therefore, $\varphi_{i}(1)>1$ is an odd divisor of $a$, for all $i$. Since $f$ is coprime to $3$, each $\varphi_{i}(1)$ is  coprime to $3$.
	We claim that $Q/\ker(\theta)$ is abelian. To show this, it suffices to prove that $Q'\leq \ker(\varphi_{i})$, for all $i$, as if this true, then $Q'\leq \cap_{i=1}^{k}\ker(\varphi_{i})=\ker(\theta^{I})=\ker(\theta)$ which implies that $Q/\ker(\theta)$ is abelian.  Suppose that $Q'$ does not contain $\ker(\varphi_{j})$ for some $j$. Then $(\varphi_{j})_{Q}=\sum_{i} \sigma_{i}$ with $\sigma_{i}$ nonlinear in $\Irr(Q)$. Since $M/\ker(\theta)$ is central in $I/\ker(\theta)$, by Ito's theorem \cite[Theorem 19.9]{b:Hupp-Char}, we conclude that the non-trivial degrees of $Q/\ker(\theta)$ are powers of $3$, and hence  $\varphi_{j}(1)$ is divisible by $3$, which is a contradiction. Therefore, $Q/\ker(\theta)$ is abelian as claimed. This implies that $Q'\leq \ker(\theta)$, and so $Q'\leq M$. Thus $\Q=Q/M$ is abelian, and this contradicts the fact that  $\Q'$ is a group of order $q^{2}$, see \cite[Theorem]{a:Ward-66}. Therefore, $\In_{G'}(\theta)=G'$.
	
	We now show that $M=M'$. Since every linear character $\theta$ of $M$ is $G'$-invariant, we apply Lemma~\ref{lem:schur} and conclude that $|M/M'|$ divides the order of Schur Multiplier $\Msf(H_{0})$ of $H_{0}$ which is trivial by Lemma~\ref{lem:out}, and so $M=M'$. Suppose that $M$ is non-abelian, and let $N\leq M$ be a normal subgroup of $G'$ such that $M/N$ is a chief factor of $G'$. Then $M/N\cong S^{k}$, for some non-abelian simple group $S$. It follows from Lemma~\ref{lem:exten} that $S$ possesses a non-trivial irreducible character $\theta$ such that $\chi:=\theta^{k}\in \Irr(M/N)$ extends to $G'/N$. By Lemma~\ref{lem:gal}(ii), we must have $\chi(1)\psi(1)\in \cd(G'/N)\subseteq \cd(G')$, for all $\psi \in \Irr(G'/M)$. Now we can choose $\psi\in G'/M$ such that $\psi(1)=q^{3}$, and since $\theta$ is non-trivial, $\chi(1)\psi(1)=\theta(1)^{k}\cdot q^{3}$ divides some degree of $G$, which is a contradiction. Therefore, $M$ is abelian, and since $M=M'$, we conclude that $M=1$. Consequently, $G'$ is isomorphic to $H_{0}$.
\end{proof}

We are now ready to prove Theorem~\ref{thm:main}. In what follows, suppose that  $G$ is a finite group with $\cd(G)=\cd(H)$, where $H$ is an almost simple group whose socle is $H_{0}=\, ^{2}\G_{2}(q)$ with $q=3^{f}\geq  27$ and $f$ odd. Suppose also that $|H:H_{0}|=d$. \medskip

\noindent {\rm \textbf{Proof of  Theorem~\ref{thm:main}}}
The case where $H=H_{0}$ has been treated in \cite{a:Wakefield-2011-2G2}. Here, we assume that $H\neq H_{0}$. By Proposition~\ref{prop:step-1}, we conclude that $G'=G''$, and so $G$ is nonsolvable. Moreover, Proposition~\ref{prop:step-3} implies that the chief factor $G'/M$ is isomorphic to $H_{0}$. Suppose now that $f$ is coprime to $3$. Then Proposition~\ref{prop:step-3} implies that $G'\cong H_{0}=\, ^{2}\G_{2}(q)$. It follows from Lemma~\ref{lem:as} that $T:=G'\Z(G)=G'\times \Z(G)$ and $G/\Z(G)$ is isomorphic to $H_{0}: \Cyc_{s}\leq \Aut(H_{0})$ with $s$ a divisor of $d=|H:H_{0}|$. By Lemma~\ref{lem:deg}(v), the group $H$ has an irreducible character $\chi$ of degree $(q^{3}+1)d$. Then by Lemma~\ref{lem:gal}(i), there exists $\theta\in\Irr(T)$ such that $\chi(1)=k\theta(1)$ for some divisor $k$ of $|G:T|=s$. Note that $\cd(T)=\cd(G')=\cd(^{2}\G_{2}(q))$. Then by inspecting the degrees of $ ^{2}\G_{2}(q)$, we conclude that $\theta(1)=q^{3}+1$, and so $k=d$. This in particular implies that $d\leq s$. If $s>d$, then $G/\Z(G)\cong H_{0}:\Cyc_{s}$ has an irreducible character of degree $(q^{3}+1)s$, so does $G$, which is a contradiction. Therefore, $s=d$, and hence $G/\Z(G)$ is isomorphic to $H=H_{0}:\Cyc_{d}$.\hfill \qed

\section*{Statements and Declarations}

The author confirms that this manuscript has not been published elsewhere. It is not also under consideration by another journal. The author also confirms that there are no known conflicts of interest associated with this publication and there has been no significant financial support for this work that could have influenced its outcome.
The author has no competing interests to declare that are relevant to the content of this article and he confirms that availability of data and material is not applicable.

\section*{Acknowledgments}
The author are grateful to the editor and the anonymous referees and  for careful
reading of the manuscript and for corrections and suggestions.


\end{document}